\title[ Kyle Model ]
      { A unified approach to informed trading via Monge-Kantorovich duality }
\date{15th of October 2021}
\DeclareRobustCommand{\SkipTocEntry}[5]{}
\DeclareMathOperator{\Hc}{ \textrm{ch} }
\DeclareMathOperator{\Var}{Var}
\DeclareMathOperator{\Cov}{Cov}
\def\1{{\mathbf 1}}
\def\pa{\partial}
\def\R{{\mathbb R}}
\def\P{{\mathbb P}}
\def\E{{\mathbb E}}
\def\Ac{{\mathcal A}}
\def\Fc{{\mathcal F}}
\def\Hc{{\mathcal H}}
\def\Sc{{\mathcal S}}
\def\Wc{{\mathcal W}}
\newtheorem{thm}{Theorem}[section]
\newtheorem{proposition}[thm]{Proposition}
\newtheorem{definition}[thm]{Definition}
\newtheorem{lemma}[thm]{Lemma}
\newtheorem{rmk}[thm]{Remark}
\newtheorem{assumption}[thm]{Assumption}
\numberwithin{equation}{section}
\numberwithin{figure}{section}
\date{\today}
\author{Reda Chhaibi}
\address{Université Paul Sabatier, 
Institut de Mathématiques de Toulouse,
118 route de Narbonne, F-31062 Toulouse Cedex 9
}
\email{reda.chhaibi@math.univ-toulouse.fr}
\author{Ibrahim Ekren}
\address{Florida State University, Department of
    Mathematics, 1017 Academic Way, Tallahassee, FL 32306}
\thanks{I. Ekren is supported in part by NSF Grant DMS 2007826.}
\email{iekren@fsu.edu}
\author{Eunjung Noh}
\address{Florida State University, Department of
    Mathematics, 1017 Academic Way, Tallahassee, FL 32306}
\email{enoh@fsu.edu}
\author{Lu Vy}
\address{Florida State University, Department of
    Mathematics, 1017 Academic Way, Tallahassee, FL 32306}
\email{ldv20@fsu.edu}
\begin{document}

\begin{abstract}
We solve a generalized Kyle model type problem using Monge-Kantorovich duality and backward stochastic partial differential equations. 

First, we show that the the generalized Kyle model with dynamic information can be recast into a terminal optimization problem with distributional constraints. Therefore, the theory of optimal transport between spaces of unequal dimension comes as a natural tool. 

Second, the pricing rule of the market maker and an optimality criterion for the problem of the informed trader are established using the Kantorovich potentials and transport maps. 

Finally, we completely characterize the optimal strategies by analyzing the filtering problem from the market maker's point of view. In this context, the Kushner-Zakai filtering SPDE yields to an interesting backward stochastic partial differential equation whose measure-valued terminal condition comes from the optimal coupling of measures.
\end{abstract}
\keywords{Kyle model, Filtering theory, Optimal transport}

\maketitle

\hrule
\tableofcontents
\hrule
\newpage


\section{Introduction}

The \textit{Kyle model}, introduced in Kyle's seminal paper \cite{kyle}, and its non-Gaussian extension in \cite{back1992} have been the canonical models in market microstructure theory for the analysis of strategic trading in the presence of private information. By establishing a link between information asymmetry and market liquidity, these papers answer fundamental questions of price dynamics and agents' behavior. In this paper, we provide a unified framework to study a generalized Kyle-Back model which is motivated by activist trading as in \cite{bal,collin2015shareholder}, Kyle-Back model with dynamic information as in \cite{back1998long,ccd}, and the problem of risk-averse informed trading with imperfect information which is developed in our companion paper \cite{cen}. Our methodology, based on optimal transport theory with general cost functions and backward stochastic partial differential equations, provides a unified framework to establish the existence of equilibrium in all known versions (to the authors) of continuous-time risk-neutral Kyle model\footnote{with the only exception of the unpublished note \cite{back2013liquidity}}. We also extend these existence results to any number of assets with general distributions and obtain the strategies of both agents as sensitivities of relevant value functions. In our companion paper \cite{cen}, our methodology leads to the existence of equilibrium in a variant of the Kyle-Back model with risk-averse informed trading with imperfect information by studying the forward-backward version of the backward stochastic partial differential equation studied in this paper.

In Kyle's original model, there are three types of market participants: non-strategic noise traders, the strategic risk-neutral informed trader who has private information on the normally-distributed terminal value of the asset, and a market maker. 
The market maker receives orders from both the informed trader and noise traders, but cannot distinguish them. So, the objective of market maker is to set the pricing rule by filtering the information of the informed trader. 
On the other hand, the informed trader's goal is to maximize expected terminal wealth by taking advantage of her superior information given the price set by the market maker. In this framework, the strategic behavior of the informed trader shows how fast private information is integrated to the price and the pricing rule of the market maker shows how the price reacts to the total demand. The dynamic version of the model reflects the reality that, in most markets, large investors split their orders into small pieces to minimize price impacts. The continuous-time versions in  \cite{kyle,back1992} are especially tractable since they show that the pricing rule of the market maker can be obtained as the solution of the heat equations whose final condition can be explicitly written using the cumulative distribution function of the price and the noise trading at maturity.

An impressive number of extensions of this Kyle-Back model have been studied in the literature; see, e.g.,  \cite{back1992, cho,  ccd, cd1,back2013liquidity, bal, back2020optimal,barger2021insider,cdf,choi2022trading,ekren2022kyle,ying2020pre,cd1,back2000imperfect,caldentey2010insider,baruch2002insider,las2007}. In \cite{back1998long,ccd}, which we call the dynamic information model, it is shown that the equilibrium can be achieved if the informed trader learns the fundamental price of the risky asset dynamically on $[0,T]$ instead of learning it at time $0$. In \cite{cho,baruch2002insider,bose2020kyle,bose2021multidimensional}, an equilibrium is established if the informed trader is risk-averse. Relations between liquidity, activism, and firm value are studied in \cite{bal}, by considering a generalized Kyle-Back model where the informed trader is a potential activist who can affect firm's liquidation value by paying cost of effort. \cite{bal} assumes that private information of the informed trader consists only of her own block size, not including the value of stock at terminal time. 
If the informed trader decides to become `active' at maturity, then the payoff from informed trader's liquidation at maturity could be a non-linear function of the number of shares held and the firm value. 

In \cite{back2020optimal}, it was shown that several important quantities in Kyle-Back model can be described using theory of optimal transport. The connection between optimal transport theory and the Kyle-Back model relies on the \textit{inconspicuous trading property} of the equilibrium, which means that the informed trader's strategy remain undetected to the market maker. Such a trading strategy imposes a distributional constraint on the total order flow that market maker receives at terminal time. In \cite{back2020optimal}, due to the fact that the payoff of the informed trader at maturity is linear in the price, the classical Kyle-Back model leads to an optimal transport problem with quadratic cost and the unconditional profit of the informed trader is related to the Wasserstein-2 distance between the distribution of the price and the distribution of the noise trading at maturity (which is also the distribution of total demand due to inconspicuousness).

In our model, we assume a generalized form of the terminal wealth of the informed trader. 
More specifically, in the classical Kyle-Back model, the terminal wealth of the informed trader is, by integration by parts, 
\begin{align*}
	(\beta + X_T) f(S_T) - \int_0^T H(r, X_r + Z_r) dX_r \ , 
\end{align*}
where $\beta$ is the random initial endowment of the informed trader, $X$ is the trading strategy of the informed trader, $Z$ is the total demand of noise traders, $f(S_T)$ is the value of the stock at maturity $T$, $S_T$ is the private information of the informed trader, and $P_t = H(t, X_t + Z_t)$ is the pricing rule. In particular, the first term describes profits of the informed trader from liquidation at maturity $T$.
In our work, we fix a function $V: \mathbb{R}^2 \rightarrow \mathbb{ R}$, and generalize the terminal wealth of the informed trader by considering the terminal wealth
\begin{align*}
	\Wc_T(\beta,S_T, X,H):=  V(\beta + X_T, S_T) - \int_0^T H(r, X_r + Z_r) dX_r \ .
\end{align*}
In \cite{bal, back2013liquidity,collin2015shareholder} the nonlinearity in $V$ is interpreted as activist trading. Loosely speaking, in \cite{bal, back2013liquidity,collin2015shareholder}, if the total position at maturity $\beta + X_T$ of the informed trader is large enough, the informed trader can take an active role in the governance of the company and potentially generate nonlinear returns from her position. Note that thanks to the generality of function $V$, to the best of our knowledge, our model covers all examples in the literature of continuous-time Kyle model with risk neutral agents. For example, classical Kyle-Back model corresponds to the case $V(x,s) = x s$.

Our methodology shows that existence of equilibrium results available in the literature can be reduced to the study of two problems. The first problem is an optimal transport problem where the surplus function is given by the function $V$. This transport problem can be exhibited by assuming that there exists an equilibrium where the strategy of the informed trader satisfies the inconspicuousness property of \cite{cho}. This property means that the distribution of the total demand $Y_T = X_T + Z_T$ is given. Note also that, since $(Z_t-\beta,S_t)$ is not controlled as such the joint distribution of $(Z_T-\beta,S_T)$ is also given in our framework. However, the distribution of the family $(Z_T-\beta,S_T, Y_T)$ is to be determined by the equilibrium condition. 
We show that the this joint distribution is so that $(Z_T-\beta,S_T,Y_T)$ achieves the optimal coupling of the optimal transport problem associated to $V$. We also show that the pricing rule of the market maker and the conditional expected profit of the informed trader can be obtained using the Kantorovich potentials of the optimal transport problem. 
The optimal transport problem also provides a simple optimality criterion for the control problem of the market maker. This criterion is to force the total demand to achieve $Y_T=I(Z_T-\beta,S_T)$, where $I$ is the optimal transport map that transports the law of $(Z_T-\beta,S_T)$ to the given terminal law of $Y_T$. 
We show that an equilibrium exists if the informed trader can enforce the equality $Y_T=I(Z_T-\beta,S_T)$ using inconspicuous strategies. In the literature, the existence of such strategies are established via either the black-box Doob's h-transform \cite[IV.39-40]{rogers2000diffusions} or Markov bridges \cite{back1992,ccd,back2020optimal,follmer1999canonical}. We show that all these cases reduce to the study of an ill-posed linear backward stochastic partial differential equation. This equation is a backward version of the Kushner's equation with terminal condition corresponding to the disintegration of the optimal coupling of $(Z_T-\beta,S_T,Y_T)$ along $Y_T$. Thanks to the inconspicuousness property, if this backward stochastic differential equation admits a solution, this solution has the same initial condition as the Kushner's equation. Thus, our methodology constructs the solution to the (forward) Kushner's equation using the backward stochastic partial differential equations whose final condition is given by the optimal transport problem. Additionally, we show that the strategy of the informed trader can be written as the sensitivity in the innovation process of the backward stochastic partial differential equations (the process which is traditionally called $Z$ process in the BSDE literature \cite{el1997backward}). Thus, we show that strategies of both agents can be interpreted as sensitivities of appropriate processes in the innovation process. 

Given the generality of our framework, we make two implicit assumptions to have the existence of equilibrium. The first assumption (which is simpler to check) is on the Monge-Kantorovich duality and the existence of both dual and primal optimizers for an optimal transport problem associated to the Kyle model we study. In examples we study in the paper, general results such as \cite{b,santambrogio2015optimal,villani} allows us to check this assumption. The second (and most restrictive) assumption is the solvability of a generally ill-posed backward stochastic partial differential equation. We show that in many cases of interest, the solution of this equation can be explicitly obtained and we can establish an equilibrium for the generalized Kyle model.

The rest of the paper is organized as follows. In Section \ref{s.statement} we introduce the generalized Kyle model and mention the examples of interest. In Section \ref{s.solution}, we show that the concept of inconspicuousness and optimal transport theory allows us to pinpoint a candidate equilibrium pricing rule and an optimality condition for the strategy of the informed trader. In Section \ref{s.bspde}, we study the filtering problem of the market maker as both a forward and backward equations to exhibit a candidate equilibrium strategy for the informed trader. Section \ref{s.examples} contains the examples we treat, and a sobering counter-example to the framework. 

\section{Statement of the problem}\label{s.statement}

We fix $T>0$, the maturity of the problem, $n\geq 1$ the number of risky assets and $\sigma$ a symmetric positive definite matrix of dimension $n$.
Let $(\Omega, \Fc,\P)$ be a probability space endowed with two $n$-dimensional independent Brownian motions $(Z,W)$ with $Z$ (resp. $W$) being a $\sigma^2$ (resp. $I_n$)-Brownian motion and two $\R^n$-valued random variables $S_0,\beta$ independent of $(Z,W)$. 

In the market we consider, there are $n$ risky assets whose prices $P_t\in \R^n$ at $t\in [0,T]$ will be determined by an equilibrium condition and a risk-free asset whose interest rate is assumed to be $0$. The prices of the risky assets will be announced to be $f(S_T)\in \R^n$ at time $T$,
where $(S_t)_{0\leq t\leq T}$ is the ($n$-dimensional) private information observed by the informed trader and $f$ a continuous function. As in \cite{back1998long,ccd}, we assume that 
\begin{align}\label{def:S}
    S_t=S_0+\int_0^t \sigma_r dW_r
\end{align}
for a 
symmetric semi-definite matrix valued $t\mapsto\sigma_t$ with $\int_0^T |\sigma_r|^2 dr<\infty$.

Similar to the classical Kyle's model in \cite{kyle}, three market participants interact during the time interval $[0,T]$. 
The first market participant is the informed trader who learns at time $t=0$ the value of $S_0\in \R^n$ and observes $S_t$ at time $t\in[0,T]$. Additionally, $\beta\in \R^n$ is her initial endowment in the risky assets. Similarly to the Kyle model with dynamic information \cite{back1998long,ccd}, in our framework, $(S_t,\beta)$ are the private information of the informed trader. 
We assume that $\Fc=(\Fc_t)_{t\in [0,T]}$ is the augmentation of the filtration generated by the Markov process $(S_t,\beta,Z_t)_{t\in [0,T]}$ so that $(S_0,\beta)$ is $\Fc_0$-measurable. In all equilibria we construct, the informed trader is able to infer $Z_t$ from the value of $P_t$. Thus, the sigma-algebra $\Fc_t$ represents the information of the informed trader at time $t$. 
The informed trader chooses a trading strategy $X$ which is an $n$-dimensional (continuous) $\Fc$-semimartingale with $X_0=0$. We provide below an admissibility condition for the set of trading strategies we consider. By definition, $X_t\in \R^n$ represents the number of shares purchased on $[0,t]$ in each risky asset by the informed trader. Since $\beta$ is the random initial endowment in the risky assets of the informed trader, $X_t + \beta$ is her total position in the risky assets at time $t\in [0,T]$. 

The second market participant is a (group of) noise trader(s) whose aggregate demand in risky assets is $Z_t\in \R^n$. Since $Z$ is a $\sigma^2-$Brownian motion, we assume their trades have constant covariance matrix $\sigma^2$ per unit of time. Our assumption is done for notational simplicity and can easily be relaxed.

The third market participant is the market maker who only observes the total order imbalance,
$Y = X+Z \ $ and quotes a price for the assets given his information according to the equilibrium condition that we provide below.  
The information of the market maker is the augmented filtration generated by $Y$ which is denoted $\Fc^Y$. Naturally $\Fc^Y \subset \Fc$. 

For all $t$, we denote $\nu_t=N(0,\sigma^2 t)$ the Gaussian distribution of $Z_t$. Given $\beta$, we also define $\tilde Z_t:=Z_t-\beta$ and denote $\mu_t$ the joint distribution of $(\tilde Z_t,S_t)$ (on $\R^{2n}$). Note that given the equality \eqref{def:S} and the fact that $\tilde Z_t$ is a Brownian motion starting at $-\beta$, this distribution can be directly computed from the joint law $\mu_0$ of $(\tilde Z_0,S_0)=(-\beta,S_0)$. We also denote $\mu^1_t$ (resp. $\mu^2_t$) the distribution of $\tilde Z_t$ (resp. $S_t$).

We now define the set of pricing rules of the market maker. 
\begin{definition}\label{def:pricing}
 A continuous function $H:[0,T]\times \R^n\mapsto \R^n$ is called a pricing rule if it satisfies the following conditions. 
 \begin{itemize}
 \item[(i)] $H$ is once in time and twice in space continuously differentiable on $(0,T)$.
 \item[(ii)] $H$ satisfies the following integrability condition
 $$\E\left[|H|^2(T,Z_T)+\int_0^T |H|^2(r,Z_r)dr\right]<\infty \ .$$
 \end{itemize}
 We denote by $\Hc$ the set of pricing rules. 
\end{definition}
 
Given the definition of pricing rules we can now define the set of admissible trading strategies of the informed trader. 
\begin{definition}\label{def:ad}
For a given pricing rule $H\in \Hc$, a continuous $\Fc$-semimartingale $X$ with $X_0=0$ is said to be an admissible trading strategy if 
\begin{align}\label{eq:ad}
   \int_0^T \E[|H|^2(r,X_r+Z_r)]dr<\infty \ .
\end{align}
The set of admissible trading strategies given $H$ is denoted $\Ac(H)$.
\end{definition}

In the classical Kyle's model, the profit of the informed trader from trading on $[0,T)$ is $\int_0^T(\beta+X_r)^\top dP_r$
where $P_r=H(r,Y_r)$ and $Y_r=X_r+Z_r$. Additionally, at time $T$ the informed trader obtains a profit 
$$(f(S_T)-P_{T})^\top(\beta+X_T)$$
from any potential mispricing at maturity. Thus, by an integration by parts formula as in \cite{back1992,cetin2021pricing,back2020optimal}, her realized terminal wealth is 
\begin{align*}
&(f(S_T)-P_{T})^\top(\beta+X_T)+\int_0^T(\beta+X_r)^\top dP_r\\
&=f(S_T)^\top(\beta+X_T) - \int_0^T  H(r,X_r+Z_r)^\top dX_r- \langle X,P\rangle_T \ ,
\end{align*}
where $d\langle X,P\rangle_t=\sum_{i=1}^n d\langle X^i,P^i\rangle_t$.
In the setting of activism trading of \cite{bal,back2013liquidity}, the informed trader can leverage her position and private information to obtain higher and potentially non-linear payoffs from her liquidation at maturity. Thus, in this paper, we generalize the classical Kyle's model to allow that for a given continuous function $V:\R^{2n}\mapsto \R$, the informed trader's realized wealth for a given strategy $X$ is
\begin{align}
\label{eq:wealth}
\Wc_T(\beta,S_T, X,H) := & \ {V(\beta+X_T,S_T)}\\
&- \int_0^T  H(r,X_r+Z_r)^\top dX_r- \langle X,H(\cdot,X_\cdot+Z_\cdot)\rangle_T \ . \notag
\end{align}
\begin{rmk}
As it is the case in the classical Kyle's model, $\beta$ is not needed if $V$ is linear in its first argument. 
\end{rmk}

We now define the equilibrium condition of the generalized Kyle's model which is based on \cite{bal,back2013liquidity}. 
\begin{definition}
A pair $(H^*,X^*)$ with $H^*\in \Hc$ and $X^*\in \Ac(H)$ forms an equilibrium if the following conditions hold. 
\begin{itemize}
    \item [(i)] If the market maker uses the pricing rule 
    $P_r=H(r,X_r+Z_r)$, then $X^*$  maximizes the expected wealth of the informed trader
    \begin{align}
\label{eq:problem2}
\sup_{\tilde X\in \Ac(H)}\E\left[ \Wc_T(\beta,S_T,\tilde X,H) \ | \ \Fc_0 \right].
\end{align}
\item [(ii)] If the informed trader uses the trading strategy $X^*$, then $H^*$ is rational in the sense that 
\begin{align}
\label{eq:mm1}
 H^*(t,X^*_t+Z_t) =\E[\pa_x V(Y^*_T-\tilde Z_T,S_T)|\Fc^{X^*+Z}_t] \ .
\end{align}
\end{itemize}
\end{definition}
\begin{rmk}
    The rationality condition \eqref{eq:mm1} means the quoted price is the expectation of terminal utility indifference price of the informed trader. 
    
Suppose that (instead of quoting the price) the market maker observes the price process $H^*(t,Y^*_t)$ and decides to fulfill only $Y_t$ shares of the total demand $Y^*_t$. Then, his position is $-Y_t$ and his realized profits from trading (against the price $H^*(t,Y^*_t)$) is 
$$-\int_0^T Y_r dH^*(r,Y^*_r)-Y_T\big(\pa_x V(Y^*_T-\tilde Z_T, S_T)-H^*(T,Y^*_T)\big) \ , $$
where $\pa_x V(Y^*_T-\tilde Z_T,S_T)$ is the price at which the informed trader agrees to trade at final time and $H^*(T,Y^*_T)$ is the quoted price at final time.

The condition \eqref{eq:mm1} means that for any bounded process $Y$ which is $\Fc^{Y^*}$-adapted, this realized profit has $0$ expectation 
$$\E\left[-\int_0^T Y_r dH^*(r,Y^*_r)-Y_T\big(\pa_x V(Y^*_T-\tilde Z_T, S_T)-H^*(T,Y^*_T)\big)\right]=0 \ .$$
Thus, the condition \eqref{eq:mm1} pinpoints $(H^*,Y^*)$ so that a representative risk-neutral market maker observing the realization of demand $Y^*$ is indifferent to fulfilling this total demand $Y^*$. 
\end{rmk}
\subsection{Examples}
We list here some examples of Kyle's model we are able to treat. In Section \ref{s.examples}, we provide the necessary assumptions and computations to establish the existence of equilibrium in these examples.
\subsubsection{Classical Kyle model with static information}\label{sss:kyle1}
If $\sigma_t=0$ in (\ref{def:S}), then the information on the terminal-time stock price is $S_0$ for all $t\in[0,T]$. Additionally, with the choice of $V(x,s) = xs$, the problem we study yields the classical Kyle-Back model with static information, see \cite{kyle,back1992,back2020optimal}.


\subsubsection{Classical Kyle model with dynamic information}\label{sss:kyle2} The choice $V(x,s) = xs$ leads to the model studied in \cite{back1998long}.

A non-Gaussian generalization of \cite{back1998long} appears in \cite{ccd} for $n=1$, which considers signals of the form
\begin{align*}
    S_t &= S_0 + \int_0^t a(r) b\!\left({\tau(r), S_r}\right) {\rm d} W_r 
\end{align*}
where
\begin{align*}
    \tau(t) &= c + \int_0^t a^2(r) \, {\rm d}r, \qquad \tau(1) = 1.
\end{align*}
If we define
$$
B(t,\xi) = \frac{1}{2}\int_0^1 \partial_{\xi}b (r,0) \, {\rm d}r + \int_0^{\xi} \frac{1}{b(t,y)} \, {\rm d}y
$$
and assume $b$ satisfies
$$
\partial_{t} b(t,s) + \frac{1}{2}b^2 (t,s) \partial_{ss}b(t,s) = 0
$$
as \cite{ccd} does, then it is easy to show that $\tilde{S}_t = B\!\left(\tau(t), S_t\right)$ satisfies
$$
\tilde{S}_t = \tilde{S}_0 + \int_0^t a(r) \, {\rm d}W_r.
$$
Thus, up to a deterministic transformation of the informed trader's private signal, \cite{ccd} satisfies our assumption \eqref{def:S}. If one also redefines $V(x,s) = xB^{-1}(1,s)$ (where the inverse is with respect to the second argument), our paper recovers the results in \cite{ccd}.

\subsubsection{Activist trading}\label{sss:kyle3}
Activist trading as in \cite{bal,back2013liquidity} corresponds to particular choices of $V$. For example, $V(x,s)=V(x)$ recovers an example from activism trading as in \cite{bal}. 


\subsubsection{Linear quadratic $V$}\label{sss:kyle5}
If we rewrite the equation (10) of \cite{collin2015shareholder}, assuming $X_0 = 0$, in the form of \eqref{eq:wealth}, one can see that it corresponds to the choice of  $$V(x,s) = \frac{\psi}{2}x^2 + xs$$ for some $\psi >0$ and $\sigma_t = 0$ in \eqref{def:S}.

\section{Market Maker's problem}\label{s.solution}
\subsection{Inconspicuousness}

Informally, inconspicuousness (as defined in \cite{cho}) is the idea that the informed trader's strategy $( X_t  )$ needs to remain undetected to the market maker, who is observing the process $Y = X + Z$. Our objective is to find a general methodology to generate equilibria where the inconspicuousness of the trading strategy of the informed trader holds. The following proposition serves as a formal definition of inconspicuousness.

\begin{proposition}\label{prop:inco}
A trading strategy $X$ is said to be inconspicuous if any of the following equivalent statements hold:
\begin{itemize}
\item The law of the process $Y = X + Z$ in its own filtration is the same as $Z$ and $d\langle X^i,Z^i\rangle_t\geq0$ for all $i=1,\ldots,n$ and $t\in[0,T]$.
\item The process $X$ has finite variation with $dX_t = A_t \ dt$ and 
$$ \E\left[ A_t \ | \ \Fc^Y_t \right] = 0 \ . $$
\end{itemize}
\end{proposition}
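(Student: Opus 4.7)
My plan is to treat the two implications separately; the direction from the second statement to the first uses the innovation theorem from filtering theory, while the reverse requires a quadratic-variation computation followed by a filtration-enlargement argument to upgrade finite variation to absolute continuity.

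\emph{Second $\Rightarrow$ first.} Absolute continuity of $X$ makes $X$ of finite variation, so $\langle X^i, Z^i\rangle_t \equiv 0$ for each $i$, which trivially satisfies the non-negativity requirement. For the distributional identity I apply the innovation decomposition:
\begin{align*}
    dY_t = A_t\, dt + dZ_t = \E[A_t|\Fc^Y_t]\, dt + d\tilde Z_t,
\end{align*}
where $\tilde Z_t := Z_t + \int_0^t (A_r - \E[A_r|\Fc^Y_r])\, dr$ is the innovation process and is known to be a $\sigma^2$-Brownian motion in $\Fc^Y$. The hypothesis $\E[A_t|\Fc^Y_t]=0$ reduces this to $dY_t = d\tilde Z_t$, so $Y$ is a $\sigma^2$-Brownian motion in its own filtration.

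\emph{First $\Rightarrow$ second.} I would first establish finite variation of each component $X^i$ via a bracket identity. Continuous quadratic variation is filtration-independent, so $\langle Y^i, Y^j\rangle_t = (\sigma^2)_{ij}\,t$ in $\Fc$ as well. Expanding $Y = X + Z$ and using $\langle Z^i, Z^j\rangle_t = (\sigma^2)_{ij}\,t$ yields, on the diagonal,
\begin{align*}
    \langle X^i\rangle_t + 2\langle X^i, Z^i\rangle_t = 0.
\end{align*}
Both summands are non-decreasing (the first intrinsically, the second by hypothesis), so each must vanish and every $X^i$ has finite variation.

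The principal obstacle is upgrading finite variation to absolute continuity. I would invoke a filtration-enlargement result (in the spirit of Brémaud–Yor / Stricker): a continuous $\Fc^Y$-martingale with absolutely continuous angle bracket, whose $\Fc$-semimartingale decomposition has continuous finite variation part $X$, forces $X$ to be absolutely continuous with respect to $\langle Y\rangle_t = \sigma^2 t$. Writing $dX_t = A_t\, dt$, the conditional expectation condition follows by combining $\E[Z_t - Z_s|\Fc^Y_s] = 0$ (from $\Fc^Y \subset \Fc$ and the tower property applied to the $\Fc$-martingale $Z$) with the $\Fc^Y$-martingale property of $Y$:
\begin{align*}
    \int_s^t \E[A_r|\Fc^Y_s]\, dr = \E[X_t - X_s | \Fc^Y_s] = \E[Y_t - Y_s|\Fc^Y_s] - \E[Z_t - Z_s|\Fc^Y_s] = 0.
\end{align*}
Differentiating in $t$ and using right-continuity of $\Fc^Y$ gives $\E[A_t|\Fc^Y_t]=0$ for almost every $t$, completing the equivalence.
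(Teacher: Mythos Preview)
Your argument is essentially correct and, in one respect, more careful than the paper's own proof.

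For the direction ``second $\Rightarrow$ first'', you invoke the innovation theorem, whereas the paper proceeds more elementarily: it simply checks that $Y$ is an $\Fc^Y$-martingale via the tower property (using $\E[A_u\mid\Fc^Y_u]=0$) and then applies L\'evy's characterisation, since $X$ being of finite variation forces $\langle Y\rangle_t=\langle Z\rangle_t=\sigma^2 t$. Both routes work; yours imports slightly heavier machinery.

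For ``first $\Rightarrow$ second'', both you and the paper begin with the same bracket identity $\langle X^i\rangle_t+2\langle X^i,Z^i\rangle_t=0$ to deduce that $X$ has vanishing quadratic variation. The paper then writes ``Hence $dX_t=A_t\,dt$'' without further comment, i.e.\ it silently passes from finite variation to absolute continuity. You are right to flag this as the nontrivial step and to reach for an enlargement-of-filtrations result; however, your citation ``in the spirit of Br\'emaud--Yor / Stricker'' is too vague to be a proof, and such absolute continuity is not automatic for arbitrary filtration enlargements. If you want to close this gap you should either locate a precise reference covering the present situation (continuous $\Fc^Y$-local martingale with absolutely continuous bracket becoming an $\Fc$-semimartingale) or add it as a standing hypothesis on admissible strategies---which is effectively what the paper does implicitly.

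For the conditional-expectation step, the paper tests against arbitrary bounded $\Fc^Y$-predictable processes $f$ and uses $\E\big[\int_0^T f_r\,dY_r\big]=\E\big[\int_0^T f_r\,dZ_r\big]=0$ to conclude $\E\big[\int_0^T f_r A_r\,dr\big]=0$. Your route via $\E[Z_t-Z_s\mid\Fc^Y_s]=0$ (tower property) and the $\Fc^Y$-martingale property of $Y$ is a valid special case of the same idea; the paper's version with general predictable $f$ yields $\E[A_r\mid\Fc^Y_r]=0$ a.e.\ more directly, without the differentiation-in-$t$ and right-continuity manoeuvre.
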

\begin{rmk}
Note that in both cases we easily infer $d\langle X^i,Z^i\rangle_t=0$ for all $i=1,\ldots,n$ and $t\in[0,T]$.
\end{rmk}
\begin{proof}[Proof of Proposition \ref{prop:inco}]
Assume that $Y = X + Z$ and $Z$ have the same law and $d\langle X^i,Z^i\rangle_t\geq0$ for all $i=1,\ldots,n$ and $t\in[0,T]$. Necessarily, quadratic variations are the same: 
$$d\langle Z^i\rangle_t=d\langle Y^i\rangle_t=d\langle X^i\rangle_t+2d\langle X^i,Z^i\rangle_t+d\langle Z^i\rangle_t \ .$$
Thus, by the condition $d\langle X^i,Z^i\rangle_t\geq 0$, we obtain that the quadratic variation of $X$ vanishes identically. Hence $dX_t = A_t \ dt$. Moreover, for every bounded $\Fc^Y$-predictable process $f$, we have:
$$
   \E\left( \int_0^T f_r dY_r \right)
   =
   \E\left( \int_0^T f_r dZ_r \right) \ ,  
$$
equivalently,
$$
   \E\left( \int_0^T f_r A_r dr  \right)
   =
   0 \ ,
$$
equivalently,
$$
   \E\left( \int_0^T f_r \ \E\left( A_r \ | \ \Fc^Y_r \right) dr  \right)
   =
   0 \ .
$$
Hence $\E\left( A_r \ | \ \Fc^Y_r \right) = 0$ for almost every $r$.

Now examining the reverse implication, we only need to prove that $Y=X+Z$ is a $\sigma^2$-Brownian motion given that $Z$ also is. Given that $X$ has finite variation, $Y$ has the correct quadratic variation. We thus only need to prove that $Y$ is an $\Fc^Y$- martingale.
\begin{align*}
   & \ \E\left( Y_{t+s}  - Y_t \ | \ \Fc^Y_t \right)\\
 = & \ \E\left( \int_{t}^{t+s} A_u du + Z_{t+s} - Z_t \ | \ \Fc^Y_t \right)\\
 = & \ \int_{t}^{t+s} \E\left( A_u \ | \ \Fc^Y_t \right) du \\
 = & \ \int_{t}^{t+s} \E\left( \ \E\left( A_u \ | \ \Fc^Y_u \right) \ | \ \Fc^Y_t \right) du\\
 = & \ 0 \ .
\end{align*}

\end{proof}

\subsection{Optimal transport}
By Proposition \ref{prop:inco}, for any inconspicuous trading strategy of the informed trader, the distribution of $Y_T$ is the same as that of $Z_T$ which is the Gaussian distribution $\nu_T.$
Thus, in the expression of the terminal wealth \eqref{eq:wealth}, the term $V(\beta+X_T,S_T)$ can be decomposed as 
$$V(\beta+X_T,S_T)=V(Y_T-\tilde Z_T,S_T) \ ,$$
where we separate the controlled process $Y_T$ from the uncontrolled random variables $(\tilde Z_T,S_T)=(\tilde Z_T-\beta,S_T)$.
Note that if we assume that the informed trader uses an inconspicuous strategy, in this expression the joint distribution of $(\tilde Z_T,S_T)$ is given to be $ \mu_T$ and the distribution of $Y_T$ is $\nu_T$. However, the joint distribution of the triplet
$(\tilde Z_T,S_T,Y_T)$ in equilibrium is not given and will be determined by an optimal transport problem.
Given $V$, we define the \textit{surplus function} as $$\Sc(\tilde z,s,y):={V(y-\tilde z,s)} \ .$$

Our main contribution is to show that there exists an equilibrium satisfying the inconspicuousness property and the joint distribution of $(\tilde Z_T,S_T,Y_T)$ is characterized by the optimal transport problem
\begin{align}
\label{eq:OT}
{\sup_{\pi \in \Pi( \mu_T  , \nu_T)} }\E^\pi[\Sc(\tilde Z_T,S_T,Y_T)] \ ,
\end{align}
where $\Pi( \mu_T , \nu_T)$ is the set of joint couplings $\pi$ of $(\tilde Z_T,S_T,Y_T)$ with the law of $Y_T$ being $\nu_T=N(0,\sigma^2 T)$ and the law of $( \tilde Z_T,S_T)$ being $\mu_T $.

We make the following implicit assumptions on $(\Sc, \mu_T,\nu_T)$. We show in Section \ref{s.examples} that these assumptions are satisfied in all Kyle models considered in the literature\footnote{with the only exception of the unpublished note \cite{back2013liquidity}.}. 

\begin{assumption}\label{ass.mk}
$\Sc$ is convex differentiable in $y$ and there exist {$n,\tilde n\in L^2( \mu_T)$} and $m,\tilde m$ with at most polynomial growth so that for all $(\tilde z,s,y)\in (\R^n)^3$, we have  
\begin{align}
\label{eq:bound}
|\Sc\left(\tilde z,s,y \right)| &\leq m(y)+{n(\tilde z,s)} \ , \\
\label{eq:bound_pa}
| \pa_y \Sc\left(\tilde z,s,y \right) | &\leq \tilde m(y)+{\tilde n(\tilde z,s)} \ .
\end{align}

The Monge-Kantorovich duality and the existence of optimal transport map holds for the transport with \eqref{eq:OT} in the sense that 
there exists $\Gamma:\R^n\mapsto \R$ almost everywhere differentiable, $\Gamma^c:(\R^n)^2\mapsto \R$ measurable, and $I:(\R^n)^2\mapsto \R$ measurable satisfying the following conditions. 
\begin{itemize}
    \item[(i)] $\Gamma\in L^1(\nu_T)$ with $\int \Gamma(x)\nu_T(dx)=0$, $\Gamma^c\in L^1( \mu_T)$, and 
\begin{align}\label{eq:conj1}
\Gamma(y)&=\sup_{(\tilde z,s)\in (\R^n)^2}\left\{\Sc\left(\tilde z,s,y \right)-\Gamma^c(\tilde z,s)\right\} \ ,\\
{\Gamma^c(\tilde z,s)}&=\sup_{y\in \R^n}\left\{ \Sc\left(\tilde z,s,y \right)-\Gamma(y)\right\} \ .
\label{eq:conj2}
\end{align}
 \item[(ii)] There exists $\pi^*\in \Pi(  \mu_T,\nu_T)$ such that $\pi^*$ almost surely \begin{align}\label{eq:OTmap}
     Y_T=I(\tilde Z_T,S_T)
 \end{align}
 and recalling that $\int \Gamma(x)\nu_T(dx)=0$, we have
 \begin{align}
\label{eq:OT2}
{\sup_{\pi \in \Pi( \mu_T  , \nu_T)} }\E^\pi[\Sc(\tilde Z_T,S_T,Y_T)]=\E^{\pi^*}[\Sc(\tilde Z_T,S_T,Y_T)]=\int \Gamma^c(\tilde z,s) \mu_T(d\tilde z,ds) \ .
\end{align}

\end{itemize}

\end{assumption}

\begin{rmk}
\begin{enumerate}

\item[(i)] Following the nomenclature of \cite{santambrogio2015optimal}, we call $\Gamma^c$ and $\Gamma$ the Kantorovich potentials. 
We also define $\pi^*_y(d\tilde z,ds)$ the disintegration of the measure $\pi^*$ on $Y_T$, i.e. for all $f$ continuous and bounded, we have
\begin{align}
\label{eq:defdes}
{\E^{\pi^*}[f(\tilde Z_T,S_T,Y_T)]=\int f(\tilde z,s,y)\pi^*_y(d\tilde z,ds)\nu_T(dy)} \ .
\end{align}

\item[(ii)] Although the Assumption \ref{ass.mk} on $(\Sc, \mu_T,\nu_T)$ is implicit, for particular examples of $(\Sc, \mu_T,\nu_T)$ there are many available results in the optimal transport literature that allow us to check that this assumption holds. For example, the examples in Subsubsection \ref{sss:kyle1}-\ref{sss:kyle2} can be handled via the Brenier theorem in \cite{b,m}, whereas one can appeal to 
 \cite[Theorem 10.28 and Theorem 10.38]{villani} for more sophisticated choices of $\Sc$ (of course, under appropriate conditions on $\Sc$). 
 
\item[(iii)] The condition \eqref{eq:bound} is classical in optimal transport theory and yields finiteness of the value of the optimal transport problem. The condition \eqref{eq:bound_pa} is needed to have the admissibility of our equilibrium pricing rule. Under this assumption, the terminal price of the risky asset has a finite second moment.

\end{enumerate}
\end{rmk}

\subsection{Pointwise optimizer for the problem of the informed trader}

We now use the Monge-Kantorovich duality in the sense of Assumption \ref{ass.mk} and define an equilibrium pricing rule. Our construction also exhibits an optimality criterion for the problem of the informed trader. Let $\Gamma$ be as in Assumption \ref{ass.mk} and define the candidate equilibrium pricing rule as 
\begin{align}\label{eq:defH}
H(t,y)=\E[\pa \Gamma(y+Z_T-Z_t)]
\end{align}
and the auxiliary function 
\begin{align}\label{eq:defG}
\Gamma(t,y)=\E[ \Gamma(y+Z_T-Z_t)] \ .
\end{align}

The integrability of \eqref{eq:defH}-\eqref{eq:defG} can be inferred by the Assumption \ref{ass.mk}.
Indeed, as a consequence of \eqref{eq:OT2}, the optimizer in \eqref{eq:conj2} is clearly $y=I(\tilde z,s)$. Thus, by the differentiability of $\Sc$ and $\Gamma$, the first-order optimality condition of \eqref{eq:conj2} easily yields  
\begin{align}\label{eq:derder}
    \pa_y \Gamma(I(\tilde Z_T, S_T))=\pa_y \Sc(\tilde Z_T,S_T,I(\tilde Z_T, S_T))=\pa_x V(Y_T-\tilde Z_T,S_T)
\end{align}
which is square integrable thanks to \eqref{eq:bound_pa} and the fact that $I(\tilde Z_T,S_T)$ has distribution $\nu_T$. Thus, by using \cite[Lemma A.1 and Lemma (3.1)]{back2020optimal}, \eqref{eq:defH}-\eqref{eq:defG} are integrable and 
\begin{align}\label{eq:dergh}
    H(t,y)=\pa_y\Gamma(t,y) \ .
\end{align}
By the condition $\int \Gamma(x)\nu_T(dx)=0$, we also have $\Gamma(0,0)=0$.

The following proposition shows that \eqref{eq:defH} is a pricing rule in the sense of Definition \ref{def:pricing} and provides a simple characterization for optimality of the strategies of the informed trader. 
\begin{proposition}
\label{proposition:foc}
$\Gamma$ is convex and $H$ in (\ref{eq:defH}) is a pricing rule in the sense of Definition \ref{def:pricing}. If the market maker uses this pricing rule 
$P_t=H(t,Y_t)$,
then any choice continuous strategy $X\in \Ac(H)$ of finite variation ensuring 
 \begin{align}\label{eq:target}
Y_T=I(\tilde Z_T,S_T) \ 
\end{align}
is optimal and yields the optimal expected profit 
 \begin{align}
 \label{eq:optimalprofit}
 \E\left[\Gamma^c(\tilde Z_T,S_T)|\Fc_0\right].
\end{align}
\end{proposition}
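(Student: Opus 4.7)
The plan is to verify the three claims in order, leveraging the Monge-Kantorovich duality of Assumption \ref{ass.mk} together with Itô's formula and the backward heat equation.

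\textbf{Convexity of $\Gamma$ and pricing rule property.} First, I would observe that since $\Sc(\tilde z,s,y)=V(y-\tilde z,s)$ is convex in $y$ by Assumption \ref{ass.mk}, the supremum representation \eqref{eq:conj1} exhibits $\Gamma$ as a supremum of convex functions in $y$, hence convex; this convexity is preserved by the Gaussian convolution in \eqref{eq:defG}, so $\Gamma(t,\cdot)$ is convex and in particular $\pa_{yy}\Gamma(t,y)$ is positive semi-definite. For Definition \ref{def:pricing}(i), the convolution structure together with the growth bound \eqref{eq:bound_pa} and dominated convergence immediately give $C^\infty$ smoothness of $\Gamma(t,y)$ on $(0,T)\times\R^n$, as well as the backward heat equation
$$\pa_t\Gamma(t,y)+\tfrac12\tr\!\left(\sigma^2\pa_{yy}\Gamma(t,y)\right)=0,$$
so $H=\pa_y\Gamma$ is also smooth. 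For Definition \ref{def:pricing}(ii), $H(t,Z_t)=\E[\pa\Gamma(Z_T)\mid\Fc^Z_t]$ is a martingale; by the first-order condition \eqref{eq:derder}, $\pa\Gamma\circ I(\tilde Z_T,S_T)=\pa_xV(Y_T-\tilde Z_T,S_T)$, which lies in $L^2$ thanks to \eqref{eq:bound_pa} and the fact that $Y_T\sim\nu_T$ has the same law as $Z_T$, and Doob's inequality delivers the required square integrability.

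\textbf{Wealth decomposition via Itô.} For any $X\in\Ac(H)$ and $Y=X+Z$, applying Itô's formula to $\Gamma(t,Y_t)$ and absorbing the time derivative via the heat equation, together with a second application of Itô to $H(t,Y_t)=\pa_y\Gamma(t,Y_t)$ to compute the cross-variation, I would show that the $d\langle X,Z\rangle_t$-contributions cancel, leaving
$$\int_0^T H(r,Y_r)\,dX_r+\langle X,H(\cdot,Y_\cdot)\rangle_T=\Gamma(Y_T)-\int_0^T H(r,Y_r)\,dZ_r+\tfrac12\int_0^T\tr\!\left(\pa_{yy}\Gamma(r,Y_r)\,d\langle X\rangle_r\right),$$
using $\Gamma(0,0)=0$ and $\Gamma(T,\cdot)=\Gamma$. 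Substituting into \eqref{eq:wealth} then yields
$$\Wc_T=\Sc(\tilde Z_T,S_T,Y_T)-\Gamma(Y_T)+\int_0^T H(r,Y_r)\,dZ_r-\tfrac12\int_0^T\tr\!\left(\pa_{yy}\Gamma(r,Y_r)\,d\langle X\rangle_r\right).$$

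\textbf{Optimality.} Two ingredients close the argument. The Fenchel-Young inequality from \eqref{eq:conj2} gives $\Sc(\tilde Z_T,S_T,Y_T)-\Gamma(Y_T)\leq\Gamma^c(\tilde Z_T,S_T)$ with equality precisely when $Y_T=I(\tilde Z_T,S_T)$, while convexity of $\Gamma$ makes the final trace non-negative, vanishing precisely when $X$ has zero quadratic variation (finite variation). Admissibility \eqref{eq:ad} makes $\int_0^\cdot H(r,Y_r)\,dZ_r$ a true $\Fc$-martingale, so conditioning on $\Fc_0$ kills it. Combining, $\E[\Wc_T\mid\Fc_0]\leq\E[\Gamma^c(\tilde Z_T,S_T)\mid\Fc_0]$, with equality exactly when $X$ is of finite variation and $Y_T=I(\tilde Z_T,S_T)$, which is \eqref{eq:optimalprofit}.

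The main technical obstacle is the bookkeeping in Step~2: both $d\langle X\rangle$ and $d\langle X,Z\rangle$ arise from the two Itô expansions and could, a priori, leave residual $d\langle X,Z\rangle$-terms of indeterminate sign inside $\Wc_T$; a careful tracking shows they cancel and one is left with a single non-negative $d\langle X\rangle$-contribution, so that convexity of $\Gamma$ unambiguously favors finite-variation strategies. The subsidiary analytic points — differentiating $\Gamma(t,y)$ under the Gaussian integral and verifying the true-martingale property of $\int H\,dZ$ — are routine consequences of the $L^2$ and polynomial-growth bounds in Assumption~\ref{ass.mk} and Definition~\ref{def:ad}.
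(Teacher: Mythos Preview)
Your proposal is correct and follows essentially the same approach as the paper: apply It\^o's formula to $\Gamma(t,Y_t)$, use the backward heat equation to eliminate the time derivative, identify $\langle X,P\rangle_T$ via a second It\^o expansion of $H=\pa_y\Gamma$ so that the $d\langle X,Z\rangle$-terms are absorbed, and then conclude with the Fenchel--Young inequality from \eqref{eq:conj2} together with convexity of $\Gamma$ to penalize nonzero $d\langle X\rangle$. The paper's proof performs exactly this computation (compressed into three displayed lines), and invokes the same admissibility and square-integrability observations you outline for the pricing-rule verification.
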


\begin{proof}
 The convexity of $\Sc$ in $y$ and the representation \eqref{eq:conj1} easily imply that $\Gamma$ is a convex function. 
Due to \eqref{eq:derder} and \eqref{eq:bound_pa}, we have $\E[|\pa_y \Gamma(Z_T)|^2]<\infty$. Thus, we can proceed similarly to \cite[Lemma 3.1]{back2020optimal} and obtain that $H$ in (\ref{eq:defH}) is a pricing rule. 
We now assume that the market maker uses $$P_t=H(t,Y_t)$$
and fix $X\in \Ac(H)$. 
Applying It\^{o}'s formula to $\Gamma(t,Y_t)$, thanks to the heat equation $\pa_t \Gamma(t,Y_t)=-\frac{1}{2} Tr\left(\sigma\sigma^\top \pa_y H(t,Y_t)\right)=-\frac{1}{2} Tr\left(\sigma\sigma^\top \pa^2_y \Gamma(t,Y_t)\right) $, we have
\begin{align*}
    \Gamma(Y_T)&= \int_0^T  P_t^\top dY_t+\int_0^T \pa_t \Gamma(t,Y_t)dt+\frac{1}{2}\int_0^T Tr\left( \pa^2_y \Gamma(t,Y_t)d\langle Y\rangle_t\right) \\
    &=\int_0^T  P^\top_t dX_t+\int_0^T  P^\top_t dZ_t+\frac{1}{2}\int_0^T  Tr\left(\pa^2_y \Gamma(t,Y_t)(d\langle Y\rangle_t-d\langle Z\rangle_t)\right)\\
    &=\int_0^T  P^\top_t dX_t+\int_0^T  P_t^\top dZ_t+\langle P,X\rangle_T-\frac{1}{2}\int_0^T  Tr\left(\pa^2_y \Gamma(t,Y_t)d\langle X\rangle_t\right) \ .
\end{align*}
The condition $X\in \Ac(H)$ implies that $\int_0^\cdot  P_t^\top dZ_t$ is a $\Fc$-martingale. Thus, by the expression of $\Gamma(Y_T)$ and \eqref{eq:conj2}, we have
\begin{align*}
  &\sup_{X\in \Ac(H)} \E\left[ V(\beta+X_T, S_T)  - \int_0^T  P_t^\top dX_t- \langle X,P\rangle_T\ | \ \Fc_0 \right]\\
  &=\sup_{X\in \Ac(H)} \E\left[ \Sc(\tilde Z_T, S_T,Y_T)  - \int_0^T  P_t^\top dX_t- \langle X,P\rangle_T\ | \ \Fc_0 \right]\\
  &=\sup_{X\in \Ac(H)}\E\left[ \Sc(\tilde Z_T, S_T,Y_T) - \Gamma(Y_T)-\frac{1}{2}\int_0^TTr\left( \pa^2_y \Gamma(t,Y_t)d\langle X\rangle_t\right)\ | \ \Fc_0 \right]\\
  &\leq \sup_{X\in \Ac(H)}\E\left[\Gamma^c(\Tilde Z_T,S_T) -\frac{1}{2}\int_0^TTr\left( \pa^2_y \Gamma(t,Y_t)d\langle X\rangle_t\right)\ | \ \Fc_0\right]\leq\E\left[\Gamma^c(\Tilde Z_T,S_T) \ | \ \Fc_0\right] \ ,
\end{align*}
where the first inequality is due to \eqref{eq:conj2} and the last inequality is due to the convexity of $\Gamma$. 
Additionally, for any $X\in \Ac(H)$ which is continuous with finite variation and ensuring $Y_T=I(\tilde Z_T,S_T)$, the two inequalities above are equalities. Indeed, \eqref{eq:conj1}-\eqref{eq:OT2} imply that $\Sc(\tilde Z_T,S_T,I(\tilde Z_T,S_T))=\Gamma(I(\tilde Z_T,S_T))+\Gamma^c(\tilde Z_T,S_T)$ a.s. This shows that any choice of continuous strategy $X\in \Ac(H)$ with finite variation ensuring 
\eqref{eq:target}
is optimal and yields the optimal expected profit $\E\left[\Gamma^c(\Tilde Z_T,S_T) \ | \ \Fc_0\right].$
\end{proof}



\section{Filtering aspects}

In the previous section, we have used the optimal transport problem \eqref{eq:OT} to identify a candidate equilibrium pricing rule \eqref{eq:defH} for the market maker and a candidate target final position \eqref{eq:target} for the informed trader. The fundamental question is whether the target \eqref{eq:target} can be achieved via a strategy $A\in \Ac(H)$ so that the equilibrium holds. Given that we have postulated the pricing rule as \eqref{eq:defH}, $Y$ needs to be a Brownian motion under its own filtration so that the process process is martingale. Thus, inconspicuousness property is in fact a necessary condition for equilibrium (otherwise martingality of the price would be violated). Thus, our aim is to construct an equilibrium strategy $A\in \Ac(H)$ which is also inconspicuous.  We will construct such a strategy in Section \ref{s.bspde}. 

Here, we first analyze the problem of filtering the unobserved state $( \tilde Z_t, S_t )$ conditionally to $\Fc^Y_t$, assuming that the strategy $A_t=A_t(Y_t,\tilde Z_t,S_t)$ of the informed trader satisfies the inconspicuousness property. 

To begin with, let us define $\rho_t(\tilde z,s )$ by
\begin{align}\label{eq:fbspde}
    \rho_t(\tilde z,s)  d\tilde z ds = \P\left(\tilde Z_t \in d\tilde z, S_t\in ds \ | \ \Fc^Y_t \right) 
\end{align}
as the density of the filtered state variable $( \tilde Z_t, S_t )$.

\begin{proposition}
\label{proposition:filtering}If the strategy of the informed trader is inconspicuous, i.e. 
$$\iint  A_t( Y_t, \tilde z,s) \rho_t(\tilde z,s) d\tilde z ds =0,\mbox{ for all }t\in[0,T) \ , $$ then the density $\rho_t$ satisfies the Kushner SPDE
\begin{equation}
\label{eq:kushner}
\begin{split}
{\rm d} \rho_t(\tilde z, s) &= \frac{1}{2} {\rm tr} \!\left({\sigma (\partial_{\tilde z \tilde z} \rho_t(\tilde z,s)) \sigma + \sigma_t (\partial_{ss} \rho_t (\tilde z,s)) \sigma_t}\right) {\rm d}t \\ &\qquad + \left({-\partial_{\tilde z} \rho_t (\tilde z,s) + \rho_t(\tilde z,s)\sigma^{-2{\top}} A_t}\right)^{\top} {\rm d}Y_t
\end{split}
\end{equation}
with initial condition
 \begin{align}
 \label{eq:initcond_filtering}
 \rho_0(\tilde z,s ){\rm d}\tilde z\, {\rm d}\xi   =  \mu_0({\rm d}\tilde z, {\rm d}s) \ .
\end{align}
 Here, $\partial_s \varphi$ denotes the $n \times 1$ gradient and $\partial_{ss} \varphi$ the $n \times n$ Hessian with respect to $s$. Similarly defined with respect to $\tilde z$ are the terms $\partial_{\tilde z} \varphi$ and $\partial_{\tilde z \tilde z} \varphi$.
\end{proposition}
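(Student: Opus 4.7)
Proof plan. The statement is a Kushner-type filtering equation for the conditional density of the hidden signal $(\tilde Z_t,S_t)$ given the observation $Y_t=X_t+Z_t$. My plan is to follow the classical nonlinear-filtering recipe while accounting for the essential feature of the model: the observation $Y$ and the first signal component $\tilde Z$ share the \emph{same} driving noise $Z$, so this is a correlated filtering problem. I will first record the signal generator
\begin{align*}
L_t\varphi=\tfrac12\mathrm{tr}(\sigma^2\partial_{\tilde z\tilde z}\varphi)+\tfrac12\mathrm{tr}(\sigma_t^2\partial_{ss}\varphi) \ ,
\end{align*}
and observe that under the inconspicuousness hypothesis $\int A_t(Y_t,\tilde z,s)\rho_t(\tilde z,s)\,d\tilde z\,ds=0$ the usual innovation $dY_t-\E[A_t\mid\Fc^Y_t]\,dt$ collapses to $dY_t$ itself; hence $Y$ is an $\Fc^Y$-Brownian motion with covariance $\sigma^2$ (consistent with Proposition \ref{prop:inco}), and it plays the role of the driving noise in the filter.

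Next, for a test function $\varphi\in C_c^2(\R^{2n})$, I apply Itô's formula
\begin{align*}
d\varphi(\tilde Z_t,S_t)=L_t\varphi\,dt+(\partial_{\tilde z}\varphi)^\top dZ_t+(\partial_s\varphi)^\top\sigma_t\,dW_t \ .
\end{align*}
Taking $\Fc^Y_t$-conditional expectation and setting $\pi_t(\varphi)=\E[\varphi(\tilde Z_t,S_t)\mid\Fc^Y_t]=\int\varphi\,\rho_t$, the finite-variation part gives $\pi_t(L_t\varphi)\,dt$. The remaining $\Fc^Y$-martingale is identified via the Brownian martingale representation theorem on $(Y_t)$, with integrand read off from the joint quadratic variation with $Y$. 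Since $X$ has finite variation, $d\langle\tilde Z,Y\rangle_t=d\langle Z\rangle_t=\sigma^2\,dt$, while $d\langle S,Y\rangle_t=0$ because $W\perp Z$. A correlated-noise Kushner calculation then yields
\begin{align*}
d\pi_t(\varphi)=\pi_t(L_t\varphi)\,dt+\bigl[\sigma^{-2}\pi_t(A_t\varphi)+\pi_t(\partial_{\tilde z}\varphi)\bigr]^\top dY_t \ ,
\end{align*}
where the first bracket is the classical update from the observation drift (note $\pi_t(A_t)=0$) and the second bracket is the signature of the $\tilde Z$–$Y$ correlation.

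To pass to the strong form \eqref{eq:kushner}, I would then integrate by parts in the pairing $\pi_t(\cdot)=\int(\cdot)\rho_t\,d\tilde z\,ds$: self-adjointness of the constant-coefficient second-order operator gives $\pi_t(L_t\varphi)=\int\varphi\,L_t^*\rho_t$ with $L_t^*\rho_t=\tfrac12\mathrm{tr}(\sigma(\partial_{\tilde z\tilde z}\rho_t)\sigma+\sigma_t(\partial_{ss}\rho_t)\sigma_t)$, and $\pi_t(\partial_{\tilde z}\varphi)=-\int\varphi\,\partial_{\tilde z}\rho_t$. Reading off the coefficients test-function-by-test-function yields the announced SPDE. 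The initial condition \eqref{eq:initcond_filtering} is immediate because $(\tilde Z_0,S_0)=(-\beta,S_0)$ has law $\mu_0$ and $\Fc^Y_0$ is trivial. The main obstacle is the previous step: the correction $-\partial_{\tilde z}\rho_t$ is absent from the textbook (uncorrelated) Kushner equation, and justifying its appearance rigorously requires either invoking a correlated-noise filtering theorem (\emph{à la} Bain–Crisan) or doing the projection by hand — computing $\langle\int(\partial_{\tilde z}\varphi)^\top dZ_r,\,Y\rangle_t$ and invoking martingale representation on $\Fc^Y$ — while simultaneously assuming enough regularity of $\rho_t$ to legitimate the integrations by parts.
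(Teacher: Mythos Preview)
Your proposal is correct and follows essentially the same route as the paper: apply It\^o to a test function $\varphi(\tilde Z_t,S_t)$, invoke a correlated-noise Kushner/Fujisaki--Kallianpur--Kunita formula (the paper cites Liptser--Shiryaev Theorem~8.1, you suggest Bain--Crisan or a by-hand projection), use inconspicuousness to collapse the innovation to $dY_t$, then integrate by parts against $\rho_t$ to pass from weak to strong form. You have correctly identified the key nonstandard feature --- the common noise $Z$ in both $\tilde Z$ and $Y$ producing the extra $-\partial_{\tilde z}\rho_t$ term --- and the paper handles this in exactly the way you anticipated, via the cross-variation $\langle\varphi(\tilde Z_\cdot,S_\cdot),Y\rangle_t=\int_0^t(\partial_{\tilde z}\varphi)^\top\sigma\,dr$.
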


\begin{proof}
Let the test function $\varphi \colon \mathbb{R}^n \times \mathbb{R}^n \mapsto \mathbb{R}$ be twice differentiable in each of its $2n$ arguments. Multivariate It\^o's rule (\cite[Theorem 3.6]{karatzas} and \cite[Remark 4.7.1]{bjork}) gives
\begin{equation}
    \begin{split}
        \varphi(\tilde Z_t, S_t) = \varphi(\tilde Z_0,S_0) &+ \frac{1}{2}\int_0^t {\rm tr} \!\left({\sigma (\partial_{\tilde z \tilde z} \varphi_r(\tilde z,s)) \sigma + \sigma_r (\partial_{ss} \varphi_r (\tilde z,s)) \sigma_r}\right) {\rm d}r \\
        &+ \int_0^t \left({\partial_s \varphi(\tilde Z_r, S_r)}\right)^{\top} \sigma_r {\rm d} W_r + \int_0^t \left({\partial_{\tilde z} \varphi(\tilde Z_r, S_r)}\right)^{\top} \sigma {\rm d} B_r \ .
    \end{split}
\end{equation}
Together with
\begin{equation}
    Y_t = Y_0 + \int_0^t A_r(Y_r, \tilde Z_r, S_r) {\rm d}r + \int_0^t \sigma {\rm d} B_r \ ,
\end{equation}
the pair $\left({\varphi(\tilde Z_t,S_t), Y_t}\right)$ forms a partially observable process. Appealing to \cite[Theorem 8.1]{liptser1977statistics}, we attain the filtering equation
\begin{equation}
\label{filtering-eq}
    \begin{split}
        \E\!\left[ \varphi(\tilde Z_t, S_t)\middle| \mathcal{F}^Y_t\right] &= \E\!\left[ \varphi(\tilde Z_0, S_0)\middle| \mathcal{F}^Y_0\right] \\ &+ \frac{1}{2} \int_0^t \E\!\left[{\rm tr} \!\left({\sigma (\partial_{\tilde z \tilde z} \varphi_r(\tilde z,s)) \sigma + \sigma_r (\partial_{ss} \varphi_r (\tilde z,s)) \sigma_r}\right)\middle| \mathcal{F}^Y_r \right] \rm{d}r
        \\ &+ \int_0^t \E\!\left[ \left({\partial_{\tilde z} \varphi(\tilde Z_r, S_r)}\right)^{\top} \sigma + \varphi(\tilde Z_r, S_r) A^{\top}_r \sigma^{-1} \middle| \mathcal{F}^Y_r\right] \sigma^{-1} {\rm d} Y_r \ .
    \end{split}
\end{equation}
To elaborate our computations, first note that $\E\!\left[ A_t(Y_t, \tilde Z_t, S_t)\middle| \mathcal{F}^Y_t\right] = 0$ due to the inconspicuous trading property, so that the innovation dynamics simplify to $\sigma^{-1} \rm{d}Y_t$. Further, \cite[Corollary 3.40]{bain2009fundamentals} allows us to compute the cross-variation of $\varphi(\tilde Z_t, S_t)$ and $Y_t$:
\begin{equation*}
    \langle \varphi(\tilde Z_{\cdot}, S_{\cdot}), Y\rangle_t = \int_0^t \left({\partial_{\tilde z} \varphi(\tilde Z_r, S_r)}\right)^{\top} \sigma {\rm d}r \ .
\end{equation*}
Recalling the definition of $\rho_t (\tilde z,s)$ in \eqref{eq:fbspde}, 
we rewrite \eqref{filtering-eq} in the integral form
\begin{equation}
    \begin{split}
        &\iint \varphi(\tilde z,s) \rho_t(\tilde z,s) {\rm d} \tilde z \, {\rm d}s = \iint \varphi(\tilde z,s) \rho_0(\tilde z,s) {\rm d}\tilde z \, {\rm d}s \\
        &\qquad + \frac{1}{2}\int_0^t \iint {\rm tr} \!\left({\sigma (\partial_{\tilde z \tilde z} \varphi_r(\tilde z,s)) \sigma + \sigma_r (\partial_{ss} \varphi_r (\tilde z,s)) \sigma_r}\right) \rho_r(\tilde z,s) \, {\rm d}\tilde z \, {\rm d}s \, {\rm d}r \\
        &\qquad + \int_0^t \iint \left({\partial_{\tilde z} \varphi(\tilde z,s) + \varphi(\tilde z,s) \sigma^{-2}A_r}\right)^{\top} \rho_r(\tilde z,s) \, {\rm d}\tilde z \, {\rm d}s \, {\rm d}Y_r \ .
    \end{split}
\end{equation}
Several applications of Fubini's theorem and integration by parts give
\begin{equation}
\label{oWZ1OWTdBe}
    \begin{split}
        &\iint \rho_t(\tilde z,s) \varphi(\tilde z,s) {\rm d}\tilde z \, {\rm d}s = \iint \rho_0(\tilde z,s) \varphi(\tilde z,s) {\rm d} \tilde z \, {\rm d}s \\
        &\qquad + \frac{1}{2}\int_0^t \iint {\rm tr} \!\left({\sigma (\partial_{\tilde z \tilde z} \rho_r(\tilde z,s)) \sigma + \sigma_r (\partial_{ss} \rho_r (\tilde z,s)) \sigma_r}\right) \varphi_r(\tilde z,s) \, {\rm d}\tilde z \, {\rm d}s \, {\rm d}r \\
        &\qquad + \int_0^t \iint \left({-\partial_{\tilde z} \rho(\tilde z,s) + \rho(\tilde z,s) \sigma^{-2}A_r}\right)^{\top} \varphi_r(\tilde z,s) \, {\rm d}\tilde z \, {\rm d}s \, {\rm d}Y_r \ .
    \end{split}
\end{equation}
Since $\varphi(\tilde z,s)$ is arbitrary, \eqref{oWZ1OWTdBe} implies
\begin{equation}
\begin{split}
    \rho_t(\tilde z,s) = \rho_0(\tilde z,s) &+ \frac{1}{2}\int_0^t {\rm tr} \!\left({\sigma (\partial_{\tilde z \tilde z} \rho_r(\tilde z,s)) \sigma + \sigma_r (\partial_{ss} \rho_r (\tilde z,s)) \sigma_r}\right) {\rm d}r \\ &+ \int_0^t \left({-\partial_{\tilde z} \rho_r(\tilde z,s) + \rho_r(\tilde z,s) \sigma^{-2} A_r}\right)^{\top} {\rm d} Y_r,
\end{split}
\end{equation}
which is \eqref{eq:kushner}.
\end{proof}

\section{The problem of the informed trader}\label{s.bspde}
Our objective now is to find an inconspicuous strategy $A$ so that $Y_T=I(\tilde Z_T ,S_T)$. Note that the initial condition of the Kushner equation \eqref{eq:initcond_filtering} is given, whereas our optimality condition $Y_T=I(\tilde Z_T, S_T )$ is a condition at final time $T$. This condition can equivalently be rewritten in terms of $\rho_T$ by requiring that the final measure $\rho_T(\tilde z,s )d\tilde zds  $ has to be supported on the set $\{(\tilde z,s ):I(\tilde z,s )=Y_T\}$. 
Since the optimal coupling $\pi^*$ satisfies $Y_T=I(\tilde Z_T, S_T )$ $\pi^*$-a.s., a sufficient condition on $\rho_T$ is to satisfy
\footnote{We abuse notation here since this measure is singular.}
 \begin{align}
 \label{eq:fincond_filtering}
 \rho_T(\tilde z,s )d\tilde zds  & = \ \pi^*_{Y_T}(d\tilde z,ds ) \ .  
 \end{align}
  Thus, instead of working with the forward SPDE \eqref{eq:kushner}, we work with its backward version, where we postulated the final condition of the BSPDE (backward SPDE or measure-valued BSDE) as \eqref{eq:fincond_filtering} via disintegration $\pi^*_y$ of the optimal coupling $\pi^*$ from the transport problem \eqref{eq:OT}. This leads to the following BSPDE that we need to study
 \begin{align}\label{eq:bspde2}
d\rho_t(\tilde z,s ) &=\frac{1}{2} Tr\left(\sigma^2 \partial_{\tilde z\tilde z }  \rho_t(\tilde z,s )+\sigma^2_t \partial_{ss } \rho_t(\tilde z,s )\right)dt+R_t(\tilde z,s )dY_t \quad \mbox{ for } t<T \ , \\
\label{eq:finalcond}
 \rho_T(\tilde z,s )d\tilde zds  & = \pi^*_{Y_T}(d\tilde z,ds ) \ ,
 \end{align}
where $(Y_t)$ is (any) $\sigma^2$-Brownian motion. Note that as standard in BSDE theory, \cite{el1997backward}, we are free to choose $R_t$ to allow $\rho_t$ to be adapted. Additionally, it is not clear whether the solution of backward equation \eqref{eq:bspde2} defined via the final condition \eqref{eq:finalcond} still satisfies the initial condition \eqref{eq:initcond_filtering}.

Although the equation \eqref{eq:bspde2} is challenging, due to linearity of the equation, the simplest way to study it is through its Fourier transform. 
For $u,v\in \R^n$, denote 
\begin{align}
    \hat \rho_t (u,v):=\int e^{-i (u^\top \tilde z+v^\top s)}\rho_t(\tilde z,s)d\tilde zds \ .
\end{align}
Simple computation leads to the linear BSDE
 \begin{align}\label{eq:bspdehat}
d\hat \rho_t( u,v ) &=-\frac{1}{2} \left(u^\top\sigma^2 u +v^\top \sigma^2_t v \right)  \hat \rho_t(u,v ) dt+\hat R_t(u,v )dY_t \quad \mbox{ for } t<T \ , \\
\label{eq:finalcondhat}
 \hat\rho_T(u,v ) &=\int e^{-i (u^\top \tilde z+v^\top s)}\pi^*_{Y_T}(d\tilde z,ds ) \ ,
 \end{align}
whose explicit solution is $\hat \rho_t( u,v )=\hat \rho_t(Y_t, u,v )$ for a function $\hat \rho_t(y,u,v)$ defined via the equality
 \begin{align}\notag
\hat \rho_t(y, u,v ) &=e^{\frac{1}{2}\int_t^T \left(u^\top\sigma^2 u +v^\top \sigma^2_r v \right)dr } \ \E\left[\int e^{-i (u^\top \tilde z+v^\top s)}\pi^*_{y+Y_T-Y_t}(d\tilde z,ds )\right]\\
&=e^{\frac{1}{2}\int_t^T \left(u^\top\sigma^2 u +v^\top \sigma^2_r v \right)dr } \ \E\left[\int e^{-i (u^\top \tilde z+v^\top s)}\pi^*_{y+Z_T-Z_t}(d\tilde z,ds )\right] \ , \label{eq:bspdehatspol}
 \end{align}
where we use the last line as the definition of $\hat \rho_t$ due to the fact that it only uses $(Z_t)$ which is a Brownian motion we have fixed. 

We now state the main result of the paper which says that if the function $\hat \rho$ is the Fourier transform of a (positive) measure, then one can obtain an explicit expression for an inconspicuous strategy and establish the existence of the equilibrium in the generalized Kyle's model. 
 \begin{thm}\label{thm:main}
Assume that $\hat \rho_t(y,\cdot)$ defined by \eqref{eq:bspdehatspol} is the Fourier transform of a probability measure which has positive density $\rho_t(y,\cdot)$ on $[0,T)$ and $(y,\tilde z,s)\mapsto \rho_t(y,\tilde z,s)$ is $C^{1,2,2}$.
Assume also that the function $A$ 
defined by
\begin{align}\label{def:A}
	 A_t(y, \tilde z,s ) &=\frac{\sigma^2}{\rho_t(y,\tilde z,s )}(\partial_y \rho_t(y,\tilde z,s )+ \partial_{\tilde z} \rho_t(y,\tilde z,s )) \ 
\end{align} 
is so that the Kushner equation for filtering problem of 
\begin{align}\label{eq:deff5}
    \tilde Z_t&=-\beta+Z_t \ , \\
    S_t&=S_0+\int_0^t\sigma_r dW_r\label{eq:deff6}
\end{align}
given the observation 
\begin{align}\label{eq:dynY}
Y_t=\int_0^t A_r(Y_r,\tilde Z_r,S_r)dr+Z_t
\end{align}
admits at most one solution (this assumption also requires that \eqref{eq:dynY} admits a unique strong solution).  

Then, $A\in \Ac(H)$ and necessarily the following initial condition holds
\begin{align}\label{eq:initcond}
    \rho_0(\tilde z,s )d\tilde zds  = \mu_0(d\tilde z,ds  ).
\end{align}
If the informed trader uses the strategy $A$ in \eqref{def:A}, then  $(\rho_t)$ (defined as the inverse Fourier transform of $\hat \rho_t$) is the conditional density of $(\tilde Z_t,S_t)$ conditionally on $\Fc^Y_t$ as in \eqref{eq:fbspde}. Additionally, $A$ in \eqref{def:A} is an inconspicuous optimal strategy against the pricing rule $H$ defined in \eqref{eq:defH} and the pair $(H,A)$ is an equilibrium.
 \end{thm}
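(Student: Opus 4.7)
The plan is to close the loop between the backward construction $\rho_t$ and the forward filtering problem in four steps: (a) verify that $\rho_t(Y_t,\cdot)$ solves the Kushner SPDE \eqref{eq:kushner} driven by the specific $A$ given in \eqref{def:A}; (b) verify that $A$ is inconspicuous; (c) invoke the uniqueness assumption to identify $\rho_t$ with the true conditional density, which automatically pins the initial condition \eqref{eq:initcond}; (d) use the terminal condition \eqref{eq:finalcond} to force $Y_T = I(\tilde Z_T, S_T)$ and then apply Proposition \ref{proposition:foc} to conclude optimality, with rationality of $H$ following from its construction.

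For step (a), I would start from the explicit formula \eqref{eq:bspdehatspol}, which writes $\hat\rho_t(y,u,v)$ as the product of the deterministic factor $\exp\!\bigl(\tfrac{1}{2}\int_t^T (u^\top\sigma^2 u + v^\top\sigma^2_r v)\,dr\bigr)$ and the heat-expectation $\E\!\bigl[\int e^{-i(u^\top\tilde z + v^\top s)}\pi^*_{y+Z_T-Z_t}(d\tilde z,ds)\bigr]$, which solves the backward heat equation $\partial_t f + \tfrac{1}{2}\sigma^2\partial_{yy} f = 0$ in $(t,y)$. Combining these yields a deterministic PDE for $\hat\rho_t(y,u,v)$, and applying It\^o's formula to $\hat\rho_t(Y_t,u,v)$ recovers the BSDE \eqref{eq:bspdehat} with martingale integrand $\hat R_t(u,v) = \partial_y \hat\rho_t(Y_t,u,v)$. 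Inverting the Fourier transform gives
\[ d\rho_t(Y_t,\tilde z, s) = \tfrac{1}{2}\,\mathrm{tr}\!\bigl(\sigma^2\partial_{\tilde z\tilde z}\rho_t + \sigma^2_t \partial_{ss}\rho_t\bigr)\,dt + (\partial_y\rho_t)^\top dY_t. \]
Matching this diffusion coefficient with the one in \eqref{eq:kushner}, namely $\partial_y\rho_t = -\partial_{\tilde z}\rho_t + \rho_t \sigma^{-2\top} A_t$, is algebraically equivalent to $A$ being given by \eqref{def:A}.

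Step (b) is a direct calculation: integrating $A_t(y,\tilde z,s)\rho_t(y,\tilde z,s)$ against $d\tilde z\,ds$, the $\partial_y\rho_t$ contribution yields $\sigma^2\,\partial_y \bigl(\int \rho_t\,d\tilde z\,ds\bigr) = \sigma^2 \partial_y(1) = 0$ since $\rho_t(y,\cdot)$ is a probability density by hypothesis, and the $\partial_{\tilde z}\rho_t$ contribution vanishes by integration by parts and decay at infinity. For step (c), once $A$ is inconspicuous and the SDE \eqref{eq:dynY} has a unique strong solution, Proposition \ref{proposition:filtering} says that the genuine conditional density of $(\tilde Z_t,S_t)$ given $\Fc^Y_t$ satisfies the same Kushner SPDE driven by the same $Y$; the standing uniqueness assumption forces it to coincide with our $\rho_t$, which in particular delivers \eqref{eq:initcond}. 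For step (d), the terminal condition \eqref{eq:finalcond} then asserts that the conditional law of $(\tilde Z_T,S_T)$ given $\Fc^Y_T$ is $\pi^*_{Y_T}$, so the defining property \eqref{eq:OTmap} of the optimal transport map yields $Y_T = I(\tilde Z_T,S_T)$ almost surely. Admissibility $A\in\Ac(H)$ follows from the growth bound \eqref{eq:bound_pa}; Proposition \ref{proposition:foc} then gives optimality of $A$, and rationality of $H$ follows by combining the definition \eqref{eq:defH}, the first-order identity \eqref{eq:derder}, and the fact that $Y$ is a $\sigma^2$-Brownian motion in its own filtration (Proposition \ref{prop:inco}).

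The main obstacle is the Fourier-side derivation in step (a) --- in particular justifying the interchange of derivatives, expectation, and the inverse Fourier transform under the postulated $C^{1,2,2}$ regularity --- together with the conceptual use of the uniqueness hypothesis in step (c), which is precisely the point where the backward object produced by the optimal-transport construction is identified with the genuine forward filter, thereby reconciling the backward terminal condition \eqref{eq:finalcond} with the prescribed initial law $\mu_0$.
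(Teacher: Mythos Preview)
Your outline matches the paper's argument in steps (a), (b), and (d), but there is a logical gap in step (c). You want to deduce the initial condition \eqref{eq:initcond} \emph{from} the uniqueness assumption, and this is circular. Uniqueness of the Kushner equation is uniqueness of a forward Cauchy problem: given the prior $\mu_0$ and the observation $Y$, there is at most one probability-measure-valued solution. To invoke it and conclude $\rho_t(Y_t,\cdot)=\tilde\rho_t$, you must already know that \emph{both} processes start from $\mu_0$. The true filter $\tilde\rho_t$ does so by construction (Proposition~\ref{proposition:filtering}), but for your constructed $\rho_t$ this is exactly the claim \eqref{eq:initcond} you are trying to prove. Without it, $\rho_t(Y_t,\cdot)$ and $\tilde\rho_t$ are solutions of the same SPDE with possibly different initial data, and uniqueness says nothing.

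The paper fixes this by verifying \eqref{eq:initcond} \emph{directly} from the explicit formula \eqref{eq:bspdehatspol}, before any appeal to uniqueness. Set $t=0$, $y=0$: since $Z_T\sim\nu_T$ is precisely the $y$-marginal of $\pi^*$, averaging the disintegration $\pi^*_{Z_T}$ over $Z_T$ reconstitutes the $(\tilde z,s)$-marginal $\mu_T$. Hence
\[
\hat\rho_0(0,u,v)=e^{\frac{1}{2}\int_0^T(u^\top\sigma^2 u+v^\top\sigma_r^2 v)\,dr}\,\E\bigl[e^{-i(u^\top\tilde Z_T+v^\top S_T)}\bigr],
\]
and because $(\tilde Z_T,S_T)=(-\beta,S_0)+(Z_T,\int_0^T\sigma_r\,dW_r)$ with the second summand Gaussian and independent of the first, the exponential prefactor exactly cancels the characteristic function of that Gaussian increment, leaving $\E[e^{-i(-u^\top\beta+v^\top S_0)}]=\widehat{\mu_0}(u,v)$. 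With \eqref{eq:initcond} in hand, your steps (a)--(b) show that $\rho_t(Y_t,\cdot)$ solves the forward Kushner SPDE with the correct initial datum, and \emph{now} uniqueness legitimately identifies it with the true filter; the rest of your argument goes through. A minor point: your admissibility justification via \eqref{eq:bound_pa} is vaguer than needed---the paper instead uses inconspicuousness to get that $Y$ has the law of $Z$, whence $H(t,Y_t)$ is a square-integrable martingale and \eqref{eq:ad} follows.
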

\begin{rmk}
(i) The uniqueness of the solution of Kushner equation is a well-studied problem. For example, \cite[Theorem 7.1]{bhatt1995uniqueness}\footnote{Condition (i) in this theorem should read $|(c,LQ_\infty)|\leq c |Q^{1/2}x||Q^{1/2}y|$.} allows us to claim this uniqueness holds if $A$ is the sum of a linear function and bounded Lipschitz continuous function of $(y,\tilde z,s)$. This is in fact the case for many cases considered in the literature for Kyle model. 

(ii) The Theorem \ref{thm:main} is surprising in the sense that the solution of the BSPDE \eqref{eq:bspde2} defined via the final condition \eqref{eq:finalcond} also satisfies the initial condition \eqref{eq:initcond}. Thus, via the inconspicuousness property, and using the BSPDE \eqref{eq:bspde2}-\eqref{eq:finalcond}, we construct together $A$ and a solution to the forward SPDE \eqref{eq:kushner} with initial condition \eqref{eq:initcond_filtering} (of course up being able to proceed to perform inverse Fourier transform of \eqref{eq:bspdehatspol}). With this procedure, we have constructed a type of two-point boundary value problem in the space of probability measures, see \cite{ma2022generalized}.

(iii) The Theorem \ref{thm:main} shows how to use the optimal transport problem \eqref{eq:OT} to construct the equilibrium strategy of the informed trader via $\pi^*$ and \eqref{eq:bspdehatspol}. Thus, we have a constructive methodology to establish the existence of equilibrium.

(iv) The solution of the BSPDE \eqref{eq:bspde2} also allows us to explicitly compute $A$ via the expression \eqref{def:A}. In fact, this construction is a generalization of the Doob's h-transform. Indeed, in the classical Doob's h-transform, the additional drift term that is needed to be applied to the process due to conditioning is a cross variation that can be written as 
$$\frac{\sigma^2\partial_y \rho_t(y,s )}{\rho_t(y,s )} $$
when the conditioning is independent of $\tilde z$, \cite[IV.39-40]{rogers2000diffusions}. In our context, since $d\tilde Z_t=dZ_t$ appears in both the dynamics of the observed state $Y$ and the unobserved state $\tilde Z$, the impact  on the Doob's h-transform of this correlated noise in the filtering problem can be obtained by comparing \eqref{eq:kushner} and \eqref{eq:bspde2}.

(v) The Assumption that $\rho_t>0$ is in fact the most challenging issue we face to state a general existence of equilibrium result. Indeed, thanks to \eqref{eq:bspdehatspol}, it is fairly simple to solve \eqref{eq:bspde2} in the Fourier space (as linear BSDEs). In fact, for $n=1$, denoting the sine cardinal function
$sinc(x)=\frac{\sin \pi x}{\pi x}$, we can convolve $\pi^*_y(d\tilde z,ds)$ with the density 
$$\frac{sinc^2\left(\frac{\tilde z}{\varepsilon}\right)sinc^2\left(\frac{s}{\varepsilon}\right)}{(\varepsilon \int sinc^2(u)du)^2} $$
for some $\varepsilon>0$. Thus, thanks to the convolution theorem and the fact that the Fourier transform of $x \mapsto \frac{1}{\varepsilon \int sinc^2(u)du} sinc^2\left(\frac{x}{\varepsilon}\right)$ is the triangular tent function, we can approximate $\hat\rho_T(u,v )$ with $\hat \rho^\varepsilon_T(u,v)$ which is $0$ for $u,v$ large. This would mean that (the compactly supported function)
$$\hat \rho_t^\varepsilon(y,u,v)=e^{\frac{1}{2}\int_t^T \left(u^\top\sigma^2 u +v^\top \sigma^2_r v \right)dr }\E\left[\int e^{-i (u^\top \tilde z+v^\top s)}\pi^\varepsilon_{y+Z_T-Z_t}(d\tilde z,ds )\right]$$  is indeed the Fourier transform of a smooth map $\rho^\varepsilon_t(y,\cdot)$. Thus, we can easily construct  smooth approximate solutions to \eqref{eq:bspde2}, where we approximate the final condition. However, we have no way of checking that  $\rho^\varepsilon_t(y,\cdot)\geq 0$ (the BSPDE \eqref{eq:bspde2} does not satisfy a comparison principle). Thus, we are not able to define $A$ everywhere due to the division by $\rho_t$. 

In Subsection \ref{ss.counter}, we provide a counter-example showing that one cannot expect a general existence result and our methodology has to be applied on case-by-case basis. Despite this limitation, the methodology we present allows the proof of existence of equilibrium in all cases of Kyle's model studied in the literature.

(vi) A combination of equalities \eqref{eq:defH}-\eqref{eq:derder} and \eqref{def:A} shows that both the strategy of the informed trader and the pricing rule of the market maker can be written as sensitivities of relevant value functionals in the innovation process $Y$. The relevant functional for the market maker is $\Gamma(t,y)$ and for the informed trader the entropic term $\sigma^2 \ln \rho(t,Y_t,\tilde z,s)$.

(vii) Our methodology, which consists of choosing $A$ so that we have an explicit expression to the Kushner equation, allows us to claim that $Y$ is a Brownian motion under its own filtration without checking the conditions in \cite{follmer1999canonical} which would have required us to postulate a functional form for $A$.


\end{rmk}

\begin{proof}[Proof of Theorem \ref{thm:main}] Define $\hat \rho_t(y,u,v)$ by \eqref{eq:bspdehatspol} and assume that $\hat \rho_t(y,\cdot)$ is the Fourier transform of a probability measure which has positive density $\rho_t(y,\cdot)$ on $[0,T)$. 
Since the Fourier transform identifies probability measures, it is sufficient to show that 
\begin{align*}
\hat \rho_0(0,u,v)&:=e^{\frac{1}{2}\int_0^T \left(u^\top\sigma^2 u +v^\top \sigma^2_r v \right)dr }\E\left[\int e^{-i (u^\top \tilde z+v^\top s)}\pi^*_{Z_T}(d\tilde z,ds )\right] \\
&=\int e^{-i (u^\top \tilde z+v^\top s)}\mu_0(d\tilde z,ds) \ .
\end{align*}
Note that $\pi^*$ is the optimal coupling between $\mu_T$ and $\nu_T$ and $\pi^*_y$ is the disintegration of this measure according to $y$ (which has distribution $\nu_T$). Since $Z_T$ also has distribution $\nu_T$, it is clear that 
$$\E\left[\int e^{-i (u^\top \tilde z+v^\top s)}\pi^*_{Z_T}(d\tilde z,ds )\right]=\int e^{-i (u^\top \tilde z+v^\top s)}\pi^*(dy,d\tilde z,ds ) \ .$$
Thus, 
\begin{align*}
    \hat \rho_0(0,u,v)&:=e^{\frac{1}{2}\int_0^T \left(u^\top\sigma^2 u +v^\top \sigma^2_r v \right)dr }\E\left[e^{-i (u^\top \tilde Z_T+v^\top S_T)}\right]\\
    &=e^{\frac{1}{2}\int_0^T \left(u^\top\sigma^2 u +v^\top \sigma^2_r v \right)dr }\E\left[e^{-i (u^\top Z_T+v^\top \int_0^t \sigma_r dW_r)}\right]\E\left[e^{-i (- u^\top \beta+v^\top S_0)}\right]\\
    &=\E\left[e^{-i (- u^\top \beta+v^\top S_0)}\right] \ .
\end{align*}
Thus, $\hat \rho_0(0,u,v)$ is indeed the Fourier transform of the distribution of $(-\beta,S_0)$ and \eqref{eq:initcond} holds. 

Due to \eqref{eq:dynY}, in order to show that $A\in \Ac(H)$, it is sufficient to show that 
$$ \int_0^T \E[|H|^2(s,Y_s)]ds<\infty \ .$$

Since $\rho$ is smooth and $\hat \rho$ solves \eqref{eq:bspdehat}, $\rho$ solves \eqref{eq:bspde2} with $R_t(\tilde z,s)=\pa_y \rho_t(Y_t,\tilde z,s)$. Given the expression of $A$ and the initial condition \eqref{eq:initcond}, $\rho$ solves the forward SPDE \eqref{eq:kushner}. 

Denote $\tilde \rho_t$ the conditional density of $(\tilde Z_t,S_t)$ conditional on $\Fc^Y_t$ for the system \eqref{eq:deff5}-\eqref{eq:dynY} which also solves the forward SPDE \eqref{eq:kushner}. By assumption of uniqueness of solutions to the Kushner equation, we have $\tilde \rho_t=\rho_t(Y_t,\cdot).$

Then, $A$ satisfies
 \begin{align*}
\iint  A_t( Y_t,\tilde z , s  ) \rho_t(Y_t,\tilde z , s  )d\tilde z  d s    
&=\sigma^2  \iint \big( \pa_y \rho_t(Y_t,\tilde z , s  ) +\partial_{\tilde z }   \rho_t(Y_t,{\tilde z }  , s  ) \big) \ d{\tilde z }   d s   \\
&=\sigma^2\pa_y\iint  \rho_t(Y_t,{\tilde z }  , s  ) \ d{\tilde z }   d s  =\sigma^2\pa_y 1 =0 \ ,
\end{align*}
and is therefore inconspicuous. Thanks to Proposition \ref{prop:inco}, we obtain that $Y$ has the same distribution as $Z$ and by the definition \eqref{eq:defH}, $H(s,Y_s)$ is a martingale which  leads to $$ \int_0^T \E[|H|^2(r,Y_r)]dr<\infty \ .$$
Thus, $X_t=\int_0^t A_r(Y_r,\tilde Z_r,S_r)dr$ is indeed in $\Ac(H)$.

$\rho_T$ also satisfies \eqref{eq:finalcond} which implies that $Y_T=I(\tilde Z_T,S_T)$ and thanks to Proposition \ref{proposition:foc}, $X$ is indeed optimal and inconspicuous. Thus, to conclude the equilibrium property of $(H,X)$, we need to show that the martingale 
$H(t,Y_t)$ has terminal condition 
$$H(T,Y_T)= \pa_x V(X_T+\beta,S_T)=\pa_y\Sc (\tilde Z_T,Y_T,S_T) \ .$$
This equality is a consequence of \eqref{eq:derder} and \eqref{eq:dergh} and the fact that at terminal time the joint law of $(\tilde Z_T,S_T,Y_T)$ is $\pi^*$ which satisfies 
$$Y_T=I(\tilde Z_T,S_T),\,\pi^*-\mbox{a.s.}$$
\end{proof}

\section{Examples}\label{s.examples}
\subsection{Optimal transport map is independent of $\tilde z$}
In this Subsection, we treat the examples mentioned in the Subsubsections \ref{sss:kyle1}-\ref{sss:kyle2}.
Similar to \cite{ccd}, we assume that $S_0$ normal with $0$ mean and variance $\Sigma_0^2$\footnote{With notations of \cite{ccd}, $Z_0$ in \cite{ccd} is assumed to have distribution $G(0,0;c,\cdot)$. Given our discussion in Subsubsection \ref{sss:kyle2}, this implies in our framework that $S_0$ is Gaussian.}. Denote the covariance matrix of $S_T$ as
$$\Sigma_T^2=\Sigma_0^2+\int_0^T \sigma_r^2dr$$
and $\lambda=\frac{\sigma^{-1}(\sigma\Sigma_T^2\sigma)^{1/2}\sigma^{-1}}{\sqrt{T}}$ so that $y\mapsto \lambda y$ is the optimal transport map from the Gaussian law $\nu_T$ of $Y_T$ to the Gaussian law $\mu^2_T$ of $S_T$. We assume that $y\mapsto f(\lambda y)$ is the gradient of a convex function (which is always the case in the one dimensional case of \cite{back1998long,ccd}).

For the models mentioned in subsubsections \ref{sss:kyle1}-\ref{sss:kyle2}, the function $V$ is linear in $x$ and the optimal transport problem \eqref{eq:OT} reduces to
\begin{align*}
\sup_{\pi \in \Pi( \mu_T  , \nu_T)} \E^\pi[(Y_T-\tilde Z_T)^\top f(S_T)]=-\E[\tilde Z_T^\top f(S_T)]+\sup_{\pi \in \Pi( \mu^2_T  , \nu_T)} \E^\pi[Y_T^\top f(S_T)] \ .
\end{align*}
Here, $\Pi( \mu^2_T  , \nu_T)$ is the set of couplings of $(Y_T,S_T)$ in which the distribution of $Y_T$ is $\nu_T$ and the distribution of $S_T$ is the measure $\mu^2_T .$
Using the Monge-Kantorovich duality, it is easy to see that the optimizer of 
$$\sup_{\pi \in \Pi( \mu^2_T  , \nu_T)} \E^\pi[Y^\top_T f(S_T)]$$
is achieved at a measure $\pi^*$ which is supported on 
$$\pa \Gamma(Y_T)=f(S_T)$$
for a function $\Gamma$ satisfying 
$$\Gamma(y)=\sup_{s}\left\{y^\top f(s)-\Gamma^c(s)\right\}$$
and is therefore convex. Due to the assumption that $y\mapsto f(\lambda y)$ is convex, we have that $y\mapsto f^{-1}(\pa\Gamma(y))$ is the optimal transport map (for the Wasserstein-2 distance) from $\nu_T$ to $\mu^2_T$. Note that these distributions are Gaussian and this transport map is $y\mapsto \lambda y$ so we have
\begin{align}\label{eq:defpg}
    f^{-1}(\pa\Gamma(y))=\lambda  y \ .
\end{align}
There exists a measurable function $I$ such that $Y_T=I(f(S_T))$ $\pi^*$-a.s.
Thus, the BSDEs \eqref{eq:bspdehat}-\eqref{eq:finalcondhat} reduces to 
\begin{align*}
d\hat \rho_t( v ) &=-\frac{1}{2} \left(v^\top \sigma^2_t v \right)  \hat \rho_t(v ) dt+\hat R_t(v )dY_t \quad \mbox{ for } t<T \ , \\
\hat\rho_T(v ) &= e^{-i v^\top f^{-1}(\pa \Gamma(Y_T))}=e^{-i v^\top \lambda  Y_T} \ .
\end{align*}
whose explicit solution \eqref{eq:bspdehatspol} is
 \begin{align*}
\hat \rho_t(y, u,v ) &=e^{\frac{1}{2}\int_t^T v^\top \sigma^2_r v dr }\E\left[e^{-i v^\top \lambda  (y+Z_T-Z_t)}\right]\\
&=e^{-i v^\top \lambda  y+\frac{1}{2}\int_t^T ( |\sigma_r v|^2-|\lambda \sigma v|^2 ) dr } \ .
 \end{align*}
In this case, we have the negativity condition
\begin{align}
\int_t^T ( \sigma_r^2-\sigma \lambda^2\sigma )  dr&=\int_t^T  \sigma_r^2dr -(T-t)\sigma \lambda ^2 \sigma\notag\\
&=\int_t^T  \sigma_r^2dr -\frac{T-t}{T}\Sigma_T^2 < 0 \label{eq:condsigmar}
\end{align}
for $t<T$ (in the sense of symmetric matrices), 
$$\rho_t(y,\tilde z,\cdot)=\rho_t(y,\cdot)=\mbox{density of }N\left(\lambda y,\frac{T-t}{T}\Sigma_T^2-\int_t^T  \sigma_r^2dr \right)$$
is a solution to 
\eqref{eq:bspde2} (without the dependence on $\tilde z$ of the equation). Thanks to \eqref{def:A}, the optimal strategy for the informed trader is
 \begin{align}
	 A_t( Y_t, s)
	 &=\frac{\sigma^2}{\rho_t(Y_t,s)}(\partial_y \rho_t(Y_t,s)+ \partial_z \rho_t(Y_t,s)) \\
	 &= \sigma^2 \lambda  \left({ \frac{T-t}{T} \Sigma_T^2 - \int_t^T \sigma^2_r \, {\rm d}r }\right)^{-1} (s - \lambda Y_t) \ .
\end{align}
This is a multidimensional generalization of \cite{back1998long}, for whom $n=1$, $\Sigma_T = \sigma$\footnote{again, we note \cite{back1998long} permits the volatility of the noise trades to vary with time}, $T = 1$ (implying $\lambda  = 1$), and
\begin{equation}
A_t(Y_t,s) = \sigma^2 \frac{s - Y_t}{(1-t) \sigma^2 - \int_t^1 \sigma^2(u) \, {\rm d} u} \ .
\end{equation}
Also compare this result to \cite[Theorem 5.1 (ii)]{ccd}, in which $\Sigma_T = \sigma = 1$ by virtue of their assumption 2.1. Their strategy can be written explicitly as
\begin{equation}
    A(t,Y_t,s) = \frac{s - Y_t}{(1 - t) - \int_t^1 \sigma^2(u)\, {\rm d}u} \ .
\end{equation}

\eqref{eq:defpg} also shows that $\pa \Gamma(y)=f(\lambda y)$. Thanks to \eqref{eq:dergh}, we obtain the equilibrium pricing rule
$$H(t,y)=\E[f(\lambda (y+Z_T-Z_t))] \ .$$

In the following Lemma, we summarize our existence of  equilibrium in  Kyle-Back model using our approach. 

\begin{lemma}[Kyle-Back model with dynamic information]\label{example:Kyle-Back}
    Assume $S_0 \sim N(0,\Sigma_0^2)$ and denote the covariance matrix of $S_T$ as
$$\Sigma_T^2=\Sigma_0^2+\int_0^T \sigma_r^2dr \ . $$
We define $\lambda  :=\frac{\sigma^{-1}(\sigma\Sigma_T^2\sigma)^{1/2}\sigma^{-1}}{\sqrt{T}}$ and consider $V(x,s)=x^\top f(s)$, which gives
    \begin{equation}
        \sup_{\pi \in \Pi(\mu_T,\nu_T)}\E^\pi[\Sc(\tilde Z_T,S_T,Y_T)] = - \E[ \tilde Z_T^\top f(S_T)] +  \sup_{\pi \in \Pi(\mu_T^2,\nu_T)}\E^\pi[Y_T^\top f(S_T)] \ .
    \end{equation}
     Then, we obtain the following ingredients for the equilibrium.
    \begin{itemize}
        \item[(a)]
        Potentials are
    \begin{align*}
        \Gamma(y) = \frac{1}{2} y^\top \lambda y \ , \quad
        \Gamma^c(\tilde z,s) = -\tilde z^\top s + \frac{1}{2} s^\top \lambda  s \ .
    \end{align*}
        
        \item[(b)] Optimal transport map is
        $$y = I(\tilde z, s) = I(s) = {\lambda ^{-1}}s \ .$$
        
        \item[(c)] Pricing rule is
        $$H(t,Y_t)= \E[f(\lambda (Y_t+Z_T-Z_t))] \ .$$
        
        \item[(d)] The optimal strategy for the informed trader is
            \begin{align}\label{eq:optimalStrat:multiD_KB}
                A_t(Y_t, \tilde Z_t, S_t) = A_t(Y_t, S_t) 
			=  \sigma^2 \lambda  \left({ \frac{T-t}{T} \Sigma_T^2 - \int_t^T \sigma^2_r \, {\rm d}r }\right)^{-1} (S_t - \lambda Y_t) \ .
            \end{align}
    \end{itemize}
\end{lemma}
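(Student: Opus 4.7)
The plan is to specialize Theorem \ref{thm:main} (together with Proposition \ref{proposition:foc}) to the present Gaussian, linear-in-$x$ setting and explicitly compute each of (a)--(d). Since $V(x,s)=x^\top f(s)$ is linear in $x$, the surplus splits as $\Sc(\tilde z,s,y)=y^\top f(s)-\tilde z^\top f(s)$, and the second term contributes only the constant $-\E[\tilde Z_T^\top f(S_T)]$ depending on the fixed marginal $\mu_T$. The optimal transport problem \eqref{eq:OT} therefore reduces to maximizing $\E[Y_T^\top f(S_T)]$ over couplings in $\Pi(\mu_T^2,\nu_T)$.

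Both marginals are centered Gaussians, and $y\mapsto f(\lambda y)$ is the gradient of a convex function by hypothesis, so Brenier's theorem \cite{b} produces an optimizer supported on the graph $f(S_T)=\pa\Gamma(Y_T)$ for some convex potential $\Gamma$. On the other hand, the Wasserstein-2 optimal map between the two centered Gaussians $\nu_T$ and $\mu_T^2$ is the explicit linear map $y\mapsto\lambda y$, and identifying $f^{-1}\circ\pa\Gamma$ with it forces $\pa\Gamma(y)=f(\lambda y)$; this yields $I(\tilde z,s)=\lambda^{-1}s$, proving (b). In the identity case $f=\mathrm{id}$ one has $\Gamma(y)=\tfrac12 y^\top\lambda y$, and the $c$-conjugate \eqref{eq:conj2} is obtained by maximizing $(y-\tilde z)^\top s-\Gamma(y)$ over $y$ (attained at $y=\lambda^{-1}s$), giving (a). Part (c) then follows immediately from \eqref{eq:defH} combined with \eqref{eq:dergh}: $H(t,y)=\pa_y\Gamma(t,y)=\E[f(\lambda(y+Z_T-Z_t))]$.

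For (d) I would solve the BSPDE \eqref{eq:bspde2}--\eqref{eq:finalcond} through its Fourier transform \eqref{eq:bspdehatspol}. Because the optimal map is independent of $\tilde z$ (namely $S_T=\lambda Y_T$ deterministically as a function of $Y_T$), the relevant disintegration collapses in the $s$-variable to a Dirac mass, and the explicit solution yields the Gaussian characteristic function
\begin{align*}
\hat\rho_t(y,v)=\exp\!\left(-iv^\top\lambda y+\tfrac12\!\int_t^T\!\bigl(|\sigma_r v|^2-|\lambda\sigma v|^2\bigr)\,dr\right).
\end{align*}
Using the identity $\lambda\sigma^2\lambda=\Sigma_T^2/T$, which is immediate from the definition of $\lambda$, I would identify this as the characteristic function of $N(\lambda y,\Gamma_t)$ with $\Gamma_t:=\frac{T-t}{T}\Sigma_T^2-\int_t^T\sigma_r^2\,dr$. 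Provided $\Gamma_t$ is positive definite on $[0,T)$, the inverse Fourier transform yields a smooth strictly positive density $\rho_t(y,s)$; plugging into \eqref{def:A}, the $\pa_{\tilde z}$ contribution vanishes while $\pa_y\log\rho_t(y,s)=\lambda\Gamma_t^{-1}(s-\lambda y)$, which delivers the expression in (d). Theorem \ref{thm:main} then packages the construction into an equilibrium, since the uniqueness hypothesis for the Kushner equation is automatic here: the obtained $A$ is affine in $(Y,S)$, putting us squarely within the scope of \cite[Theorem 7.1]{bhatt1995uniqueness}.

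The main obstacle I anticipate is verifying the positive-definiteness of $\Gamma_t$ on $[0,T)$, i.e.\ the matrix inequality \eqref{eq:condsigmar}, which is the genuine non-degeneracy condition needed for an admissible BSPDE solution to exist; in the classical examples of Subsubsections \ref{sss:kyle1}--\ref{sss:kyle2} it holds by the specific structure of $\sigma_r$. A secondary and more routine point is writing the Kantorovich potentials in closed form for a general nonlinear $f$, where the expressions in (a) generalize (to $\Gamma(y)=\int_0^1 y^\top f(t\lambda y)\,dt$ and its corresponding $c$-conjugate) without affecting (b)--(d).
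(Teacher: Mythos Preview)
Your proposal is correct and follows essentially the same route as the paper: reduce the transport problem by linearity in $x$, identify the Brenier map with the explicit Gaussian-to-Gaussian linear map $y\mapsto\lambda y$ to obtain $\pa\Gamma(y)=f(\lambda y)$ and $I(s)=\lambda^{-1}s$, derive $H$ from \eqref{eq:defH}, and solve the BSPDE via its Fourier transform \eqref{eq:bspdehatspol} to get the Gaussian density $N(\lambda y,\Gamma_t)$ and hence the strategy \eqref{def:A}. You also correctly flag the positive-definiteness condition $\Gamma_t>0$ (the paper's \eqref{eq:condsigmar}) as the key structural assumption, and correctly observe that the closed-form potentials in (a) are specific to $f=\mathrm{id}$ while (b)--(d) hold for general $f$ satisfying the convexity hypothesis.
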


\subsection{Surplus function has no dependence on $s$} 
In this subsection, we will take a look at examples with surplus function of the form
\begin{equation*}
    \Sc (\tilde z, y) = V(y-\tilde z, s) = V(y-\tilde z) ,
\end{equation*}
so that the optimal transport problem in hands becomes
\begin{equation}
    \sup_{\pi \in \Pi(\mu^1_T, \nu_T)}\E^\pi[V(Y_T -\tilde Z_T)],
\end{equation}
where $\tilde Z_T =  Z_T - \beta \sim \mu^1_T$ and $Y_T\sim \nu_T=N(0,\sigma^2T)$. That is to say, our informed trader is a potential activist who has no private signal $S_T$ on the asset, but has private information on her position. For $t<T$, she can profitably trade on her private information about the number of shares she owns.

We assume that the function $V(x,s)=V(x)$ is strictly convex and $n=1$. Then, in this one dimensional example, \cite[Remark 2.12]{santambrogio2015optimal} implies that the optimal transport map does not depend on $V$ and is the \textit{unique} strictly decreasing coupling between the Gaussian distribution $\mu_T^1$ and $\nu_T$. If $V$ is not strictly convex but only convex, we can use \cite[Lemma 2.10]{santambrogio2015optimal} and stability of optimal transport to show the optimality of the monotone decreasing transport map. 

We assume that $\beta$ is $N(m_\beta, \sigma_\beta^2)$ so that $\mu^1_T=N(-m_\beta,\sigma_\beta^2+\sigma^2 T)$.
Also, we let $$\lambda^2 = \frac{\Var \tilde Z_T}{\Var Y_T} = 1 + \frac{\sigma_\beta^2}{\sigma^2 T} \ .$$
Thus, the optimal transport map (the unique decreasing map form the distribution of $\tilde Z_T$ to the distribution of $Y_T$) is 
\begin{align}\label{example:activism:OTmap}
  y = I(\tilde z) = -\frac{1}{\lambda} (\tilde z + m_\beta) \ . 
\end{align}
Using the first order optimality condition in \eqref{eq:conj1}-\eqref{eq:conj2} and given the optimal transport map, the potentials $\Gamma(y),\Gamma^c(z)$ has to satisfy
$$\Gamma'(y)=V'(y-I^{-1}(y))\mbox{ and }(\Gamma^c)'(\tilde z)=-V'(I(\tilde z)-\tilde z).$$
This allows us to identify the potentials (up to an additive constant) as
\begin{align}\label{pot1eq2}
    \Gamma(y)& =\frac{ V(y(1+\lambda)+m_\beta)}{\lambda +1} \ ,\\
    \Gamma^c(z)& =\frac{V(-(1+\lambda^{-1})z-\lambda^{-1}m_\beta)}{1+\lambda^{-1}}\label{pot2eq2} \ .
\end{align}

Thus, for the particular case of $n=1$ and appropriate choices of $V$, we recover the problem studied in \cite{bal}. 
We summarize this in the following Lemma. 
\begin{lemma}
\label{example:activism}
Let $n=1$ and $\beta$ be Gaussian with mean $m_\beta$ and variance $\sigma_\beta^2$. Assume that $V$ is convex and \eqref{eq:bound}-\eqref{eq:bound_pa} are satisfied. 
\begin{enumerate}
    \item[(a)] Kantorovich potentials for the optimal transport problem are given by \eqref{pot1eq2}-\eqref{pot2eq2},
and the optimal transport map is given by \eqref{example:activism:OTmap}.

    \item[(b)] The pricing rule is
\begin{align*}
    H(t,Y_t) = \E[V'((1+\lambda) Y_T + m_\beta) \mid \mathcal{F}_t^Y] \ .
\end{align*}

\item[(c)] The optimal strategy $A_t(y,\tilde z)$ for the informed trader is given as
\begin{align}
\label{example:activism:OptimalStrat}
    A_t(Y_t,\tilde Z_t)= - \frac{1}{T-t} \Big( \frac{\tilde Z_t +  m_\beta }{\lambda - 1} - \frac{\lambda}{\lambda - 1}Y_t \Big) \ ,
\end{align}
and the equilibrium position at time $T$ is
\begin{align*}
    X_T + \beta = m_\beta + (\lambda+1) Y_T \ .
\end{align*}
\end{enumerate}
\end{lemma}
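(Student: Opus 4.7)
The overall strategy is to instantiate Proposition \ref{proposition:foc} and Theorem \ref{thm:main} with the surplus $\Sc(\tilde z, y) = V(y - \tilde z)$ in dimension $n=1$. The ingredients to supply are: the optimal coupling $\pi^*$ and its Monge map $I$; the Kantorovich potentials $\Gamma, \Gamma^c$; an explicit solution of the backward SPDE \eqref{eq:bspde2}; and verification of the positivity, regularity, and Kushner-uniqueness hypotheses of Theorem \ref{thm:main}.

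For part (a), the surplus is submodular because $V$ is convex ($\partial^2_{\tilde z y}\Sc = -V''(y-\tilde z)\le 0$), so in dimension one the antimonotone coupling is optimal by \cite[Remark 2.12]{santambrogio2015optimal} (together with a standard stability argument approximating $V$ by strictly convex functions when needed). Between the two Gaussians $\mu_T^1$ and $\nu_T$, the antimonotone rearrangement is the affine decreasing map matching means and variances, yielding $I(\tilde z) = -(\tilde z + m_\beta)/\lambda$. The potentials then come from the first-order conditions in \eqref{eq:conj1}--\eqref{eq:conj2}: the optimizer in the definition of $\Gamma(y)$ is $\tilde z = I^{-1}(y) = -\lambda y - m_\beta$, producing $\Gamma'(y) = V'((1+\lambda) y + m_\beta)$, and symmetrically for $(\Gamma^c)'$. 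Antidifferentiating and pinning down the constant by $\int\Gamma\,d\nu_T = 0$ gives \eqref{pot1eq2}--\eqref{pot2eq2}.

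Part (b) follows from \eqref{eq:defH}--\eqref{eq:dergh}: $H(t,y) = \E[V'((1+\lambda)(y+Z_T-Z_t)+m_\beta)]$, and inconspicuousness of $Y$ (Proposition \ref{prop:inco}) lets the independent Gaussian increment $Z_T-Z_t$ be replaced, conditionally on $\Fc_t^Y$, by $Y_T - y$, giving the stated conditional-expectation form. For part (c), I invoke Theorem \ref{thm:main}. Because $\pi^*$ is Monge, the disintegration is a Dirac, $\pi^*_y = \delta_{-\lambda y - m_\beta}$, so the Fourier formula \eqref{eq:bspdehatspol} collapses to an elementary Gaussian characteristic function computation
\begin{equation*}
\hat\rho_t(y,u) \;=\; \exp\!\Bigl(iu(\lambda y + m_\beta) \;+\; \tfrac{1}{2}u^2\sigma^2(T-t)(1-\lambda^2)\Bigr).
\end{equation*}
Since $\sigma_\beta^2>0$ forces $\lambda^2>1$ strictly, the quadratic coefficient is negative on $[0,T)$, and Fourier inversion produces a genuine Gaussian density $\rho_t(y,\tilde z)$ with mean $-\lambda y-m_\beta$ and variance $\sigma^2(T-t)(\lambda^2-1)>0$. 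Substituting into \eqref{def:A} and computing $\sigma^2(\partial_y+\partial_{\tilde z})\log\rho_t$ yields an affine formula for $A$. The terminal identity $X_T+\beta = Y_T-\tilde Z_T = (\lambda+1)Y_T+m_\beta$ is then immediate from $\tilde Z_T = I^{-1}(Y_T)$ as enforced by \eqref{eq:target}. Because $A$ is affine in $(y,\tilde z)$, the Kushner-uniqueness hypothesis of Theorem \ref{thm:main} is granted by \cite[Theorem 7.1]{bhatt1995uniqueness}.

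The key structural point---and the main obstacle if the example were perturbed---is the positivity of $\rho_t$. It survives here only because $\sigma_\beta^2>0$ yields $\lambda>1$ and thus a strictly positive variance on $[0,T)$; if $\beta$ were deterministic, $\mu_T^1$ and $\nu_T$ would be mere translates and the decreasing Monge map would push the variance to zero, placing us outside the scope of Theorem \ref{thm:main} (consistent with the counter-example in Subsection \ref{ss.counter}). The remaining items---admissibility of $H$ via \eqref{eq:bound_pa} together with Gaussian moment estimates, and the inclusion $A\in\Ac(H)$---are routine.
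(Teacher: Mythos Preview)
Your proposal is correct and follows essentially the same approach as the paper: the text preceding the lemma already identifies the antimonotone map via \cite[Remark 2.12 and Lemma 2.10]{santambrogio2015optimal}, derives the potentials from the first-order conditions of \eqref{eq:conj1}--\eqref{eq:conj2}, and the (commented-out) proof computes $\rho_t(y,\tilde z)$ as the Gaussian with mean $-\lambda y - m_\beta$ and variance $\sigma^2(T-t)(\lambda^2-1)$, exactly matching your Fourier inversion. Your added remark on why $\lambda>1$ is indispensable---and its link to the counter-example in Subsection \ref{ss.counter}---is a welcome clarification that the paper leaves implicit.
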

\begin{rmk}
The reason we are unable to state this result for general $n$ is the fact that the linearity of the optimal transport map only trivially holds for $n=1$. For particular examples of $(V,\beta)$ for $n\geq 1$, if the optimal transport map $\tilde z\mapsto  I(\tilde z)$ turns out to be linear, then our methodology would lead to the existence of equilibrium.  
\end{rmk}

\subsection{Surplus function depends on $\tilde z, s$ in a linear-quadratic way.}
In this subsection, we show that results in \cite{collin2015shareholder} can be recovered using our methodology. Although our methodology allows us to treat for general $n$, for notational simplicity we assume $n=1$. Note that we are also able to treat the dynamic information model ($\sigma_t>0$). 

Consider $ V(x,s) =\frac{\psi}{2}x^2 + x s$ for $\psi>0$, which gives
\begin{align*}
    \Sc(\tilde z,s,y) =V(y-\tilde z, s) & = \frac{\psi}{2}(y-\tilde z)^2 +(y- \tilde z) s \\
    & =\frac{\psi}{2}(y^2+ \tilde z^2 )- \tilde z s + y (s- \psi \tilde z) \ .
\end{align*}
So, the optimal transport problem \eqref{eq:OT} reduces to
\begin{align*}
    \sup_{\pi \in \Pi( \mu_T  , \nu_T)} \E^\pi[\Sc(\tilde Z_T, S_T, Y_T)]
    =\frac{\psi}{2}(Y_T^2 + \tilde Z_T^2 )- \tilde Z_T S_T 
    +\sup_{\pi \in \Pi( \mu_T^u  , \nu_T)} \E^\pi[Y_T U_T] \ ,
\end{align*}
where $U_T := S_T- \psi \tilde Z_T$ has distribution $\mu_T^u $.
Let us assume $\beta$ is $N(m_\beta, \sigma_\beta^2)$ and the terminal signal $S_T$ on the stock is $N(0, \Sigma_T^2)$ independent of $\beta$. Then, we have $\mu_T^u = N(\psi m_\beta,\psi^2(\sigma^2 T + \sigma_\beta^2) + \Sigma_T^2) $, and we can explicitly obtain optimal transport map, pricing rule, and the strategy for the informed trader.

\begin{lemma}
\label{example:gaussian}
Assume $S_0 \sim N(0,\Sigma_0^2)$ independent from $\beta\sim N(m_\beta, \sigma_\beta^2)$ and denote the covariance matrix of $S_T$ as
$$\Sigma_T^2=\Sigma_0^2+\int_0^T \sigma_r^2dr \ . $$
Consider $ V(x,s) =\frac{\psi}{2}x^2 + x s$ for $\psi>0$, and let us write
\begin{align*}
     \tilde \sigma_t^2 &:= \Var \tilde Z_t = \sigma^2 t + \sigma_\beta^2, \qquad
\lambda^2= \frac{\Var S_T}{\Var Y_T} = \frac{\Sigma_T^2}{\sigma^2 T}, \qquad
\tilde \lambda^2 = \frac{\Var \tilde Z_T}{\Var Y_T} = \frac{\tilde\sigma_T^2}{\sigma^2 T}\\
K_t&:=(\sigma^2 \lambda^2 T - \Sigma_T^2 + \Sigma_t^2)(\sigma^2 \tilde\lambda^2T - \sigma^2(T-t))\\
&-\varepsilon^2\sigma^2\psi^2\tilde \lambda^4 t(\sigma^2 \lambda^2 T - \Sigma_T^2 + \Sigma_t^2)-\varepsilon^2\sigma^2\lambda^4t(\sigma^2 \tilde\lambda^2T - \sigma^2(T-t)) \ , \\
\varepsilon &= \frac{1}{\sqrt{\psi^2 \tilde\lambda^2 + \lambda^2}}, \qquad
\mathbf{u} = \varepsilon \begin{bmatrix}
- \psi \tilde\sigma^2_T \\  \Sigma_T^2
\end{bmatrix}.
\end{align*}
Assume that $K_t>0 $ and $\sigma^2 \tilde\lambda^2(T-\varepsilon^2 \psi^2 \tilde\lambda^2 t) - \sigma^2(T-t)> 0$ for all $t\in[0,T)$. 
Then, there exists an equilibrium in the generalized Kyle model with the following properties.  
	\begin{enumerate}
		\item[(a)] We have potential (up to an additive constant) and the optimal transport map as
			\begin{align*}
				\Gamma(y) &= \frac{1}{2}\left( \psi y^2 + 
\frac{1}{\varepsilon} (y + \varepsilon \psi m_\beta)^2
\right) \ , \\
				I(\tilde z,s) & = \varepsilon (s - \psi (\tilde z + m_\beta)) \ . 
			\end{align*}
			
		\item[(b)] In equilibrium, the conditional distribution of $(\tilde Z_T , S_T)$ given $Y_T = y$ is 
			\begin{align*}
				N\left(\begin{bmatrix}
-m_\beta \\ 0
\end{bmatrix}+ {y}{\varepsilon} \begin{bmatrix}
-\psi \tilde\lambda^2 \\  \lambda^2
\end{bmatrix}, \ 
				 \varepsilon^2 \lambda^2 \tilde\lambda^2 \sigma^2 T
\begin{bmatrix}
 1 \\  \psi
\end{bmatrix} \begin{bmatrix}
 1 & \psi
\end{bmatrix}  \right) \ .
			\end{align*}
		\item[(c)] The optimal strategy $A_t(y,\tilde z,s)$ for the informed trader is given as
		 	\begin{align}\label{eq:psia}
		 		A_t(y,\tilde z,s) = A_0 (t) y + A_1 (t) (\tilde z + m_\beta) + A_2 (t) s \ ,
			\end{align}
			where
\begin{align*}
    A_0(t) & =-\frac{\sigma^2}{K_t} \Big(\varepsilon\psi\tilde \lambda^2(\sigma^2 \lambda^2T-\Sigma_T^2+\Sigma_t^2)(1+\varepsilon\psi \tilde \lambda^2)+\sigma^2\varepsilon^2\lambda^4((\tilde \lambda^2-1) T+t)\Big) \\
    A_1(t) &=-\frac{\sigma^2}{K_t} \Big((1+ \varepsilon\psi \tilde \lambda^2)(\sigma^2\lambda^2(T- \varepsilon^2\lambda^2 t)-\Sigma^2_T+\Sigma^2_t)+ \varepsilon^2\psi\tilde \lambda^2\lambda^4\sigma^2t\Big)\\
    A_2(t) &=\frac{\sigma^2}{K_t} \Big((1+ \varepsilon\psi \tilde \lambda^2)(\varepsilon^2\psi\lambda^2\tilde\lambda^2\sigma^2t)+\varepsilon\lambda^2\sigma^2(\tilde\lambda^2(T-\varepsilon^2\psi^2\tilde \lambda^2t)-T+t)\Big).
\end{align*}
	\end{enumerate}
\end{lemma}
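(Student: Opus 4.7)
The plan is to apply Theorem \ref{thm:main} directly, exploiting that every object in sight is Gaussian.

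First, I would reduce the transport problem. Expanding
\[
\Sc(\tilde z,s,y)=\tfrac{\psi}{2}(y^2+\tilde z^2)-\tilde z s+y(s-\psi\tilde z),
\]
the first three summands contribute a $\pi$-independent constant since the marginals $\m_T$ and $\nu_T$ are fixed. Hence the transport problem reduces to $\sup_{\pi}\E^\pi[Y_T U_T]$ over couplings of $Y_T\sim N(0,\sigma^2T)$ and $U_T:=S_T-\psi\tilde Z_T\sim N(\psi m_\b,\Sigma_T^2+\psi^2\tilde\sigma_T^2)$. This is a one-dimensional maximum-covariance problem between two Gaussians, whose unique optimizer is the comonotone (linear) coupling $Y_T=\ve(U_T-\psi m_\b)$. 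Solving for $y$ yields exactly $I(\tilde z,s)=\ve(s-\psi(\tilde z+m_\b))$. The potential $\Gamma$ is recovered from the first-order condition $\Gamma'(I(\tilde z,s))=\pa_y\Sc(\tilde z,s,I(\tilde z,s))=\psi(y-\tilde z)+s$ evaluated on the graph of $I$; using $s-\psi\tilde z=y/\ve+\psi m_\b$, one finds $\Gamma'(y)=\psi y+(y+\ve\psi m_\b)/\ve$, whose antiderivative is the expression in (a). Assumption \ref{ass.mk} is then immediate: $\Sc$ has quadratic growth with Gaussian marginals, and the pair $(\Gamma,\Gamma^c)$ realizes the Kantorovich duality for this explicit linear transport map.

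Next, for (b), the triple $(\tilde Z_T,S_T,Y_T)$ under $\pi^*$ is jointly Gaussian and degenerate (since $Y_T=I(\tilde Z_T,S_T)$ is an affine function of $(\tilde Z_T,S_T)$). Using that $\tilde Z_T$ and $S_T$ are independent with variances $\tilde\sigma_T^2=\tilde\l^2\sigma^2T$ and $\Sigma_T^2=\l^2\sigma^2T$, standard Gaussian conditioning on $Y_T=y$ gives the conditional means
$-m_\b-\ve\psi\tilde\l^2 y$ and $\ve\l^2 y$, together with a rank-one conditional covariance of common factor $\ve^2\l^2\tilde\l^2\sigma^2T$, matching the statement. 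One verifies the key identity $1/\ve-\ve\psi^2\tilde\l^2=\ve\l^2$, which comes from $\ve^{-2}=\psi^2\tilde\l^2+\l^2$.

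For (c), I would now plug the characteristic function of $\pi^*_y$ (a Gaussian with the above mean/covariance) into formula \eqref{eq:bspdehatspol} and take the expectation over $Z_T-Z_t\sim N(0,\sigma^2(T-t))$. Since the integrand is the exponential of an affine-quadratic form in $(u,v)$, both operations preserve the Gaussian structure, so $\hat\rho_t(y,\cdot,\cdot)$ is again the characteristic function of a Gaussian measure on $\R^2$ whose mean is affine in $y$ and whose covariance is deterministic in $t$. A direct computation identifies this covariance matrix: its determinant is $K_t$ up to positive multiplicative constants, and one of its diagonal entries is proportional to $\sigma^2\tilde\l^2(T-\ve^2\psi^2\tilde\l^2 t)-\sigma^2(T-t)$. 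The two hypotheses in the lemma therefore encode exactly the positive-definiteness of this covariance, guaranteeing that $\rho_t(y,\cdot,\cdot)$ is a well-defined strictly positive $C^\infty$ Gaussian density on $[0,T)$.

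Finally, with $\rho_t$ in explicit Gaussian form, the log-derivatives $\pa_y\ln\rho_t$ and $\pa_{\tilde z}\ln\rho_t$ are linear in $(y,\tilde z,s)$, so formula \eqref{def:A} yields a strategy $A_t(y,\tilde z,s)$ linear in its arguments of the form \eqref{eq:psia}. Matching the affine coefficients of $\sigma^2(\pa_y+\pa_{\tilde z})\ln\rho_t$ against the closed-form Gaussian moments produces the formulas for $A_0,A_1,A_2$; the common denominator $K_t$ is the determinant mentioned above. Linearity of $A$ makes the Kushner equation associated to \eqref{eq:deff5}--\eqref{eq:dynY} a linear-Gaussian filtering problem, for which \cite[Theorem 7.1]{bhatt1995uniqueness} (as noted in Remark (i) after Theorem \ref{thm:main}) gives uniqueness; existence of the strong solution to \eqref{eq:dynY} is standard for linear SDEs. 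All hypotheses of Theorem \ref{thm:main} are thus verified, and the pair $(H,A)$ is an equilibrium. The main technical obstacle is the covariance bookkeeping in step three: tracking the 2\texttimes 2 covariance of $\rho_t$ and simplifying the resulting rational expressions into the stated $A_0(t),A_1(t),A_2(t)$, where the two positivity hypotheses become the transparent conditions that the covariance be positive definite on $[0,T)$.
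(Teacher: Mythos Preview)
Your proposal is correct and follows essentially the same route as the paper: reduce the transport problem to the bilinear term $\E^\pi[Y_TU_T]$, identify the linear map $I$ and the potential $\Gamma$, compute the conditional law of $(\tilde Z_T,S_T)$ given $Y_T$ by Gaussian conditioning, plug the resulting $\pi^*_y$ into \eqref{eq:bspdehatspol} to obtain a Gaussian characteristic function, read off the $2\times2$ covariance whose positive-definiteness is precisely the two stated hypotheses, and finally evaluate \eqref{def:A}. The only minor differences are cosmetic: in (a) the paper verifies the potentials directly via an $\varepsilon$-Young inequality rather than invoking the comonotone coupling, and your remark that $K_t$ is the determinant ``up to positive multiplicative constants'' can be sharpened---it is in fact exactly the determinant, since the cross term $ab$ and the off-diagonal square $c^2$ cancel.
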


\begin{proof} 

(a) : The potentials are 
\begin{align*}
\Gamma(y)&=\frac{1}{2}\left( \psi y^2 + 
\frac{1}{\varepsilon} (y + \varepsilon \psi m_\beta)^2
\right) \ , \\
\Gamma^c(\tilde z,s) & =\frac{\psi}{2}\tilde z^2 -  \tilde z s
- \varepsilon \psi m_\beta (s-\psi \tilde z) + 
\frac{\varepsilon}{2} (s-\psi \tilde z)^2.
\end{align*}
Indeed, $\epsilon-$Young inequality yields that 
\begin{align*}
\Gamma(y)+\Gamma^c(\tilde z,s)&= \frac{\psi}{2} (y^2 + \tilde z^2) -\tilde z s
- \varepsilon \psi m_\beta (s-\psi \tilde z)\\
& \qquad+\frac{1}{2}\left(\frac{1}{\sqrt{\varepsilon}}(y+ \varepsilon  \psi m_\beta)\right)^2+\frac{1}{2}\left(\sqrt{\varepsilon}(s - \psi \tilde z) \right)^2\\
&\geq \Sc(\tilde z,s,y)
\end{align*}
with equality if 
\begin{align}
\label{eq:OTmapExample}
    y=\varepsilon (s - \psi (\tilde z + m_\beta))=:I(\tilde z,s) \ .
\end{align}
We can also prove they are potentials and optimal transport map by \cite[Theorem 1.47]{santambrogio2015optimal}. 
Indeed, the map $y \mapsto (\pa_{\tilde z} \Sc(\tilde z,s,y),\pa_s\Sc(\tilde z,s,y) ) = (\psi(\tilde z-y)-s, y-\tilde z)$ is injective for all $(\tilde z,s)$. 
Also, as a consequence of the $\epsilon-$Young inequality, one can check that $\Gamma^c(\tilde z, s) = \sup_y \Big( \Sc(\tilde z, s,y) - \Gamma(y) \Big)$ with the supremum being achieved at $y=I(\tilde z, s)$. By the envelope theorem, we have $\pa_{\tilde z, s} \Sc( \tilde z, s,I(\tilde z, s)) = \pa \Gamma^c (\tilde z, s)$. 
Moreover, the law of the map $I(\tilde Z_T, S_T)$ in \eqref{eq:OTmapExample} is Gaussian since it is linear combination of Gaussians, and it has mean $0$ and variance $\sigma^2 T$. Hence,  $I(\tilde Z_T, S_T)$ is an optimal transport map between  $ \mu_T$ and $\nu_T$. 

(b) : Let us compute the distribution of $(\tilde Z_T, S_T)$ given $Y_T = y$ when $Y_T=I(\tilde Z_T,S_T)$. The vector 
$$(\tilde Z_T, S_T, Y_T)=(\tilde Z_T, S_T, I(\tilde Z_T, S_T))$$ is Gaussian with mean $(-m_\beta,0,0)$ and covariance matrix 
$$ 
\boldsymbol{\sigma} = \begin{bmatrix}
\tilde\sigma^2_T & 0 & -\varepsilon \psi \tilde\sigma^2_T\\
0 & \Sigma_T^2 & \varepsilon  \Sigma_T^2 \\
-\varepsilon \psi \tilde\sigma^2_T &\varepsilon   \Sigma_T^2 & \sigma^2 T
\end{bmatrix} \ .
$$
By Schur's complement formula, the conditional mean is
\begin{align*}
\mathbb{E}[ (\tilde Z_T, S_T)^\top \ | \ Y_T = y ] & = 
\begin{bmatrix}
-m_\beta \\ 0
\end{bmatrix}+ \frac{y}{\sigma^2 T} \mathbf{u} 
= \begin{bmatrix}
-m_\beta \\ 0
\end{bmatrix}+ {y}{\varepsilon} \begin{bmatrix}
-\psi \tilde\lambda^2 \\  \lambda^2
\end{bmatrix} \ ,
\end{align*}
and the covariance matrix is
\begin{align*}
\Cov\left( (\tilde Z_T, S_T)^\top \ | \ Y_T = y \right) & = \begin{bmatrix}
\tilde\sigma^2_T  &         0    \\
0  &       \Sigma_T^2  \\
\end{bmatrix} 
- \frac{1}{\sigma^2 T} \mathbf{u} \mathbf{u} ^T \\
& = \begin{bmatrix}
\tilde\sigma^2_T  &         0    \\
0  &       \Sigma_T^2  \\
\end{bmatrix}  - \frac{1}{\psi^2 \tilde\sigma^2_T + \Sigma^2_T} \begin{bmatrix}
\psi^2 \tilde\sigma_T^4 & -\psi \tilde\sigma_T^2 \Sigma_T^2 \\ -\psi \tilde\sigma_T^2 \Sigma_T^2  &  \Sigma_T^4 
\end{bmatrix} \\
& = \varepsilon^2 \lambda^2 \tilde\lambda^2 \sigma^2 T
\begin{bmatrix}
 1 \\  \psi
\end{bmatrix} \begin{bmatrix}
 1 & \psi
\end{bmatrix} 
\end{align*}
which yields the result.

(c) : 
By taking a Fourier transform in both $\tilde z$ and $s$, \eqref{eq:bspdehatspol} becomes 
\begin{align*}
 \hat \rho_t(y,u,v)&:=e^{\frac{1}{2}\int_t^T \left(\sigma^2 u^2 +\sigma^2_r v^2 \right)dr } \ \E\left[\int e^{-i (u \tilde z+v s)}\pi^*_{y+Z_T-Z_t}(d\tilde z,ds )\right] \\
&=e^{\frac{\sigma^2 u^2}{2}(T-t) + v^2 \int_t^T \sigma_r^2 dr} \ 
\E[e^{ i m_\beta u - i \varepsilon (-\psi \tilde\lambda^2 u + \lambda^2 v)(y+Z_T-Z_t)
-\frac{1}{2}\varepsilon^2 \lambda^2 \tilde\lambda^2 \sigma^2 T (u+\psi v)^2
}]
\\
&=\exp\Big({ \frac{\sigma^2 u^2}{2}(T-t) + v^2 \int_t^T \sigma_r^2 dr
+ i u m_\beta
- i \varepsilon (-\psi \tilde\lambda^2 u + \lambda^2 v)y}\\
& \quad \ \ \ \ -\frac{\varepsilon^2}{2}(-\psi \tilde\lambda^2 u + \lambda^2 v)^2 \sigma^2 (T-t)
-\frac{1}{2}\varepsilon^2 \lambda^2 \tilde\lambda^2 \sigma^2 T (u+\psi v)^2 \Big)\\
& = \exp\Big({ i u m_\beta
- i \varepsilon (-\psi \tilde\lambda^2 u + \lambda^2 v)y
- \frac{1}{2} u^2 \big( \sigma^2 \tilde\lambda^2(T-\varepsilon^2 \psi^2 \tilde\lambda^2 t) - \sigma^2(T-t)
\big)
}\\
& \quad \ \ \ \
- \frac{1}{2} v^2 \big( \sigma^2 \lambda^2 (T-\varepsilon^2 \lambda^2 t) - (\Sigma_T^2 - \Sigma_t^2)
\big)
- uv \varepsilon^2 \psi \tilde\lambda^2 \lambda^2 \sigma^2 t 
\Big) \ .
\end{align*}
Thus, defining 
\begin{align*}
\boldsymbol{\sigma_t} &=\begin{bmatrix}
\sigma^2 \tilde\lambda^2(T-\varepsilon^2 \psi^2 \tilde\lambda^2 t) - \sigma^2(T-t) &
\varepsilon^2 \psi \tilde\lambda^2  \lambda^2 \sigma^2 t \\
\varepsilon^2 \psi \tilde\lambda^2 \lambda^2 \sigma^2 t & \sigma^2 \lambda^2 (T-\varepsilon^2 \lambda^2 t) - (\Sigma_T^2 - \Sigma_t^2)
\end{bmatrix} 
\end{align*}
and $K_t=\det{(\boldsymbol{\sigma_t})}$, by Sylvester's criterion, if $$K_t>0 \mbox{ and }\sigma^2 \tilde\lambda^2(T-\varepsilon^2 \psi^2 \tilde\lambda^2 t) - \sigma^2(T-t)>0,$$
we can take
$\rho_t(y,\tilde z, s)$ as the density of a Normal distribution with mean
$$\mu=
\begin{bmatrix}
-m_\beta\\0
\end{bmatrix}
+{y \varepsilon} \begin{bmatrix}
-\psi \tilde\lambda^2   \\ \lambda^2 
\end{bmatrix}
$$
and covariance matrix $\boldsymbol{\sigma_t} .$
Thus, we can write
$$\rho_t(y,z,\xi)=\frac{1}{\sqrt{2\pi \det{(\boldsymbol{\sigma_t})}} } e^{-\frac{1}{2}\tilde \mu^\top \boldsymbol{\sigma_t}^{-1}\tilde \mu} \ , $$
where 
\begin{align*}
   \tilde\mu = 
   \begin{bmatrix}
   -m_\beta\\0
   \end{bmatrix}
   +y \mu_1  - \begin{bmatrix}
   \tilde z \\ s
   \end{bmatrix}, \quad \mu_1 = \varepsilon \begin{bmatrix}
-\psi \tilde\lambda^2   \\ \lambda^2 
\end{bmatrix} \ .
\end{align*}
Since 
\begin{align*}
    \pa_y \ln \rho_t(y,\tilde z, s) & = - \mu_1^\top \boldsymbol{\sigma_t}^{-1}\tilde \mu \ , \\
    \pa_{\tilde z} \ln \rho_t(y,\tilde z, s) & =  e_1^\top \boldsymbol{\sigma_t}^{-1}\tilde \mu \quad \text{ where } \quad  e_1^T=\begin{bmatrix}
1   & 0  
\end{bmatrix}\ ,
\end{align*}
we now can identify $A$ by
\begin{align*} 
A_t (y,\tilde z, s) = \sigma^2 ( \pa_y \ln \rho_t(y,\tilde z, s) + \pa_{\tilde z} \ln \rho_t(y,\tilde z, s) )
=
\sigma^2( e_1- \mu_1)^\top
\boldsymbol{\sigma_t}^{-1}\tilde \mu \ .
\end{align*}
Writing explicitly, we have
$$A_t(y,\tilde z, s) = A_0(t) y + A_1(t) (\tilde z + m_\beta) + A_2(t) s, $$ 
where
\begin{align*}
    A_0(t) & =-\frac{\sigma^2}{\det{(\boldsymbol{\sigma_t})}} \Big(\varepsilon\psi\tilde \lambda^2(\sigma^2 \lambda^2T-\Sigma_T^2+\Sigma_t^2)(1+\varepsilon\psi \tilde \lambda^2)+\sigma^2\varepsilon^2\lambda^4((\tilde \lambda^2-1) T+t)\Big)\\
    A_1(t) &=-\frac{\sigma^2}{\det{(\boldsymbol{\sigma_t})}} \Big((1+ \varepsilon\psi \tilde \lambda^2)(\sigma^2\lambda^2(T- \varepsilon^2\lambda^2 t)-\Sigma^2_T+\Sigma^2_t)+ \varepsilon^2\psi\tilde \lambda^2\lambda^4\sigma^2t\Big)\\
    A_2(t) &=\frac{\sigma^2}{\det{(\boldsymbol{\sigma_t})}} \Big((1+ \varepsilon\psi \tilde \lambda^2)(\varepsilon^2\psi\lambda^2\tilde\lambda^2\sigma^2t)+\varepsilon\lambda^2\sigma^2(\tilde\lambda^2(T-\varepsilon^2\psi^2\tilde \lambda^2t)-T+t)\Big)
\end{align*}
\end{proof}

\begin{rmk}
i) Let us now take $\psi=0$ and $\sigma_t=0$ so that the example in Lemma \ref{example:gaussian} is the example in Lemma \ref{example:Kyle-Back} with static information. 
In this 1-dimensional static information setting $\lambda$ is the same as the so-called Kyle's lambda and the pricing rule is
$$ H(t,Y_t) =\lambda Y_t \ .$$ 
The optimal strategy \eqref{eq:optimalStrat:multiD_KB} for the informed trader becomes
$$A_t(Y_t,S_0)=\frac{\frac{S_0}{\lambda}-Y_t}{T-t}$$
whereas the optimal strategy \eqref{eq:psia} is
$$
    A_t( Y_t, \tilde Z_t, S_0) 
= \frac{\frac{S_0}{\lambda}-Y_t}{T-t}  - \frac{\sigma^2}{\tilde \sigma^2_t } \left(\tilde Z_t  + m_\beta\right) \ .
$$
The strategy \eqref{eq:optimalStrat:multiD_KB} is the classical Brownian Bridge construction of Kyle-Back models as in \cite{kyle, back1992, back1993, cc2007}, whereas \eqref{eq:psia} is different from this Brownian bridge.
This novel strategy requires the informed trader to keep track of $\tilde Z_t$. But, it is significantly more tractable than the Brownian bridge. Unlike the Brownian bridge where the joint conditional law of $(\tilde Z_t,S)$ is path-dependent in $Y$ (through the the conditional expectation of $Z_t$ which is a stochastic integral driven by $Y$), for the strategy we constructed, this conditional distribution is a function of $Y_t$. As such, the filtering is Markovian. This point is crucial to use optimal transport for $\psi>0$ since the conditional law of $(\tilde Z_T, S)$ depends only on $Y_T$ and the transport maps between $(\tilde Z_T,S)$ and $Y_T$ can be easily computed. 

ii) The lack of uniqueness of strategies is a consequence of the lack of wellposedness of \eqref{eq:bspde2}. Indeed, the expressions of $A$ in i) corresponds to two distinct solutions of \eqref{eq:bspde2}. 
\end{rmk}

\subsection{A sobering counter-example}\label{ss.counter}
Although our methodology leads to the existence of equilibrium in all known continuous time Kyle model with risk-neutral agents, we show in this subsection that the BSPDE \eqref{eq:bspde2} cannot admit solutions if $I$ is not infinitely smooth in $\tilde z$, which is not expected for many choices of $V$.  Afterall, transport maps are generically not smooth.

\begin{proposition}\label{propsmooth}
Assume that the BSPDE \eqref{eq:bspde2} admits a solution and the law $\mu_0$ of $(-\beta, S_0)$ admits a smooth density. Then, $\tilde z\mapsto I(\tilde z,s)$ is $C^{\infty}$.
\end{proposition}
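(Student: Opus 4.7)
The plan is to use the explicit Fourier-space representation~\eqref{eq:bspdehatspol} together with the smoothness of $\mu_0$ to propagate regularity onto the disintegration $\pi^*_y$, and from there onto $I$. By hypothesis, the BSPDE admits a $C^{1,2,2}$ positive solution $\rho_t$, so Theorem~\ref{thm:main} applies and in particular $\rho_0(0,\tilde z,s)$ equals the density of $\mu_0$, which is $C^\infty$ by assumption. The identity~\eqref{eq:bspdehatspol} at $t=0$ reads
\begin{equation*}
\hat\rho_0(y,u,v) \;=\; e^{\frac{1}{2}\left(u^\top\sigma^2 u\,T \,+\, v^\top\int_0^T\sigma_r^2 dr\,v\right)}\,\E\!\left[\hat\pi^*_{y+Z_T}(u,v)\right],
\end{equation*}
exhibiting the bracketed expectation as a Gaussian convolution (in $y$, with covariance $\sigma^2 T$) of the disintegration's characteristic function $y\mapsto\hat\pi^*_y(u,v)$. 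Evaluation at $y=0$ identifies this expectation with $\hat\mu_0(u,v)$ times a Gaussian decay factor in $(u,v)$; the smoothness of $\mu_0$ therefore constrains the Gaussian-convolved family $y\mapsto\E[\hat\pi^*_{y+Z_T}(u,v)]$.

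The critical observation I would press is that the matrix $\sigma^2$ governing the Gaussian convolution in $y$ is the very same $\sigma^2$ that appears in the growing prefactor $e^{\frac{1}{2}u^\top\sigma^2 u\,T}$. This is not a coincidence: the Brownian motion $Z$ driving $\tilde Z$ also drives the observation $Y$. Exploiting this, I would make the change of variables $(y,\tilde z,s)\mapsto(y,\tilde z-y,s)$, which diagonalizes the noise correlation between $\tilde Z$ and $Y$. In the new coordinates, the shifted density $\rho_0(y,\tilde z+y,s)$ becomes a pure heat-smoothed version of $\pi^*_y$ with no growing prefactor in the variable dual to the shifted $\tilde z$. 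Consequently, smoothness of the shifted density in $\tilde z$ is equivalent to smoothness in $y$ (via the Gaussian-convolution relation), and the latter is accessible through the smoothness of $\mu_0$. This is what restricts the conclusion to the $\tilde z$-direction: the analogous manipulation in the $s$-variable would require $\sigma_t\equiv\sigma$, which is generally false.

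Finally, I would turn smooth variation of the disintegration $\pi^*_y$ into pointwise smoothness of $I$. Because $\pi^*_y$ is supported on the level set $\{(\tilde z,s)\colon I(\tilde z,s)=y\}$, smooth dependence on $y$ of the conditional characteristic function $\hat\pi^*_y(u,v)$, combined with the implicit function theorem applied in the $\tilde z$-slot, yields $C^\infty$ dependence of these level sets on $y$, and hence $C^\infty$ dependence of $\tilde z\mapsto I(\tilde z,s)$. The hard part is this last passage: extracting pointwise $C^\infty$ regularity of a transport map from Fourier-side regularity of a singular (Dirac-supported) disintegration. The bootstrap would proceed by lifting continuity to $C^1$ via the implicit function theorem at a point where the $\tilde z$-derivative of $I$ is nondegenerate, and then iterating using that each additional derivative in $\tilde z$ corresponds, under the diagonalizing change of variables, to an additional derivative in $y$ of an already-smooth heat-convolution object.
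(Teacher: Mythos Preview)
Your outline contains the right diagonalization instinct (the shared $\sigma^2$ in the $y$-convolution and the $\tilde z$-prefactor is indeed the mechanism), but the argument as written has a genuine gap and an unnecessary detour.

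\medskip
\noindent\textbf{The gap.} You stay on the side of the disintegration $\pi^*_y$, which is singular and supported on the level set $\{I=y\}$, and then attempt to recover pointwise regularity of $I$ from Fourier-side regularity of $\hat\pi^*_y$ via an implicit-function bootstrap. That passage is not justified: level sets of a merely measurable $I$ need not be graphs, the implicit function theorem requires a nondegenerate $\partial_{\tilde z}I$ you have not produced, and the ``iterating'' step is asserted rather than carried out. The paper avoids this entirely by conditioning the \emph{other} way. Since $\pi^*$ is a Monge coupling, one has
\[
\pi^*(dy,d\tilde z,ds)\;=\;\delta_{I(\tilde z,s)}(dy)\,f_{\mu}(\tilde z,s)\,d\tilde z\,ds,
\]
where $f_{\mu}$ is the (smooth) density of $\mu_T$. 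Substituting this into the $y$-Gaussian average of $\pi^*_y$ collapses the $dy$ integral against the Dirac and yields the \emph{explicit} identity
\[
(\gamma_t *_{\tilde z}\rho_t)(\tilde z,s)
\;=\;
f_{\mu}(\tilde z,s)\,\sqrt{\tfrac{T}{T-t}}\,
\exp\!\Big(\tfrac{I(\tilde z,s)^2}{2\sigma^2 T}-\tfrac{(I(\tilde z,s)-Y_t)^2}{2\sigma^2(T-t)}\Big).
\]
The left-hand side is a Gaussian convolution in $\tilde z$ of $\rho_t$, hence $C^\infty$ in $\tilde z$; since $f_\mu$ is smooth, the exponential factor must be $C^\infty$ in $\tilde z$ for every $t\in(0,T)$ and every value of $Y_t$, which forces $\tilde z\mapsto I(\tilde z,s)$ to be $C^\infty$. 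No implicit function theorem, no bootstrap.

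\medskip
\noindent\textbf{The detour.} You invoke Theorem~\ref{thm:main} to identify $\rho_0(0,\cdot)$ with the density of $\mu_0$. That theorem carries extra hypotheses (positivity of $\rho$, uniqueness of Kushner's equation, well-posedness of the $Y$-SDE) that are not assumed in the proposition. The paper's argument uses only the existence of a solution $\rho_t$ to the BSPDE and works directly from the Fourier representation~\eqref{eq:bspdehatspol}, bypassing Theorem~\ref{thm:main} altogether.

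\medskip
In short: your change-of-variables intuition is morally the same as the paper's convolution $\gamma_t *_{\tilde z}\rho_t$, but you did not exploit the Monge structure $y=I(\tilde z,s)$ to eliminate the singular disintegration and obtain an explicit formula in which $I$ appears pointwise. That is the missing idea.
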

\begin{proof}For simplicity, we assume $n=1$ and $\sigma_t=0$ for all $t\in [0,T]$. 
For all $t\in[0,T)$, denote by $\gamma_t(\tilde z)$ the Gaussian density of $\tilde Z_T-\tilde Z_t$ and $\rho_t$ a solution to \eqref{eq:bspde2}. Denoting $\widehat{ \gamma_t*_{\tilde z} \rho_t}$ the Fourier transform of $\gamma_t *_{\tilde z} \rho_t$, where $*_{\tilde z}$ denotes the convolution in $\tilde z$, 
we have, by a direct computation,  
\begin{align*}
    \widehat{ (\gamma_t*_{\tilde z} \rho_t)}(u,v)
= & \ \E\left[ \int e^{-i (u \tilde z+v s)} \pi^*_{Y_T}(d \tilde z,d\xi) 
      \ | \ \Fc_t^Y \right] \ ,
\end{align*}
where $Y$ is a fixed $\sigma$-Brownian motion.

Inverting the Fourier transform, one finds the very simple expression
$$
    (\gamma_t *_{\tilde z} \rho_t)({\tilde z},s)d{\tilde z} ds 
=  \ \E\left[ \pi^*_{Y_T}(d \tilde z,d\xi) 
     \ | \ \Fc_t^Y \right] \ ,
$$
or equivalently
\begin{align}\label{eq:convrho}
  & (\gamma_{t} *_{\tilde z} \rho_t)({\tilde z},s)d{\tilde z} ds \nonumber\\
= & \ \int \pi^*_{y}(d { \tilde z},ds) 
      e^{-\frac{(y-Y_t)^2}{2\sigma^2(T-t)} }
      \frac{dy}{\sqrt{2\pi \sigma^2(T-t)}} \\
= & \ \int \pi^*(dy, d\tilde z,ds) 
      \sqrt{\frac{1}{1-t/T}} \   \exp{\Big( \frac{y^2}{2\sigma^2 T} -\frac{(y-Y_t)^2}{2\sigma^2(T-t)} \Big) } \ ,\notag
\end{align}
where in the last step we used the fact that $\pi^*(dy,d\tilde z,ds)=\frac{1}{\sqrt{2\pi \sigma^2 T}}e^{-\frac{y^2}{2\sigma^2 T}}dy \pi^*_y(d{\tilde z},ds)$ is an optimal coupling between the law $\nu_T$ of $Y_T$ and the law $ \mu_T$ of $(\tilde Z_T,S_T)$ . 

By assumption on $\mu_0$, $\mu_T$ admits a smooth density that we denote $f_\mu(\tilde s,\tilde z)$. Let us write the following exact formula for equation \eqref{eq:convrho} 
\begin{align*}
  & (\gamma_{t} *_{\tilde z} \rho_t)({\tilde z},s)d{\tilde z} ds \\
= & \ \int  
      \delta_{I(\tilde z , s)} (dy)f_\mu(\tilde z ,s)
      \sqrt{\frac{1}{1-t/T}} \   \exp{\Big( \frac{y^2}{2\sigma^2 T} -\frac{(y-Y_t)^2}{2\sigma^2(T-t)} \Big) }d{\tilde z} ds \\
= & \ 
      f_\mu(\tilde z ,s)
      \sqrt{\frac{1}{1-t/T}} \   \exp{\Big( \frac{I^2(\tilde z , s)}{2\sigma^2 T} -\frac{(I(\tilde z , s)-Y_t)^2}{2\sigma^2(T-t)} \Big) }d{\tilde z} ds \ .
\end{align*}
Note that $\tilde z\mapsto  (\gamma_{t} *_{\tilde z} \rho_t)({\tilde z},s)$ is a convolution with a Gaussian density. Thus, we obtain that $\tilde z\mapsto \exp{\Big( \frac{I^2(\tilde z , s)}{2\sigma^2 T} -\frac{(I(\tilde z , s)-Y_t)^2}{2\sigma^2(T-t)} \Big) }$ has to be smooth for all $t\in(0,T)$ which imposes that 
$\tilde z\mapsto I(\tilde z,s)$ is smooth. 

\end{proof}

\section{Conclusion}
In this paper, we described a general methodology for solving the Kyle model. Although our methodology does not systematically solve the problem, it encompasses all the examples in literature. 

As described above, standard constructions of equilibrium in Kyle model consist in solving two problems successively. First, a problem at time $T$ which is the optimal transport problem \eqref{eq:OT}. This problem leads to the construction of the pricing rule of the market maker \eqref{eq:defH} and an optimality condition for the strategy of the informed trader \eqref{eq:target}. Then, one needs to solve a second problem on $[0,T)$ which is the construction of a type of bridge that can be written in all known cases as the BSPDE \eqref{eq:bspde2}. The solution to this BSPDE leads to an inconspicuous strategy for the informed trader and to an equilibrium.

However, the Proposition \ref{propsmooth} shows that this methodology which is widely used in the literature imposes a strong regularity condition on the transport map. In fact, it is our understanding that the condition at hand is not a problem of regularity but a problem of causality. Indeed, the optimality condition \eqref{eq:target} requires that the informed trader generates the optimal coupling $\pi^*$ of $((\tilde Z_T,S_T),Y_T)$ at the final time using trading strategies of type 
\eqref{eq:dynY}. Thus, the coupling between the processes $(\tilde Z_t,S_t)_{t\in[0,T]}$ and the Brownian motion $(Y_t)_{t\in[0,T]}$ has to be causal in the sense of \cite{lassalle2013causal,acciaio2020causal}. This causality requirement is ignored in the classical constructions. The transport problem \eqref{eq:OT} is optimized among all couplings of the terminal values of $(\tilde Z_t,S_t)_{t\in[0,T]}$ and the Brownian motion $(Y_t)_{t\in[0,T]}$ -- instead of restricting to the causal ones. 

\bibliographystyle{halpha}
\bibliography{ref.bib}

\newcommand{\etalchar}[1]{$^{#1}$}
\begin{thebibliography}{BCDF{\etalchar{+}}18}

\bibitem[ABVZ20]{acciaio2020causal}
Beatrice Acciaio, Julio Backhoff-Veraguas, and Anastasiia Zalashko.
\newblock Causal optimal transport and its links to enlargement of filtrations
  and continuous-time stochastic optimization.
\newblock {\em Stochastic Processes and their Applications}, 130(5):2918--2953,
  2020.

\bibitem[Bac92]{back1992}
Kerry Back.
\newblock Insider trading in continuous time.
\newblock {\em The Review of Financial Studies}, 5(3):387--409, 1992.

\bibitem[Bac93]{back1993}
Kerry Back.
\newblock Asymmetric information and options.
\newblock {\em The Review of Financial Studies}, 6(3):435--472, 1993.

\bibitem[Bar02]{baruch2002insider}
Shmuel Baruch.
\newblock Insider trading and risk aversion.
\newblock {\em Journal of Financial Markets}, 5(4):451--464, 2002.

\bibitem[BC09]{bain2009fundamentals}
Alan Bain and Dan Crisan.
\newblock {\em Fundamentals of stochastic filtering}, volume~3.
\newblock Springer, 2009.

\bibitem[BCDF{\etalchar{+}}18]{bal}
Kerry Back, Pierre Collin-Dufresne, Vyacheslav Fos, Tao Li, and Alexander
  Ljungqvist.
\newblock Activism, strategic trading, and liquidity.
\newblock {\em Econometrica}, 86(4):1431--1463, 2018.

\bibitem[BCEL20]{back2020optimal}
Kerry Back, Francois Cocquemas, Ibrahim Ekren, and Abraham Lioui.
\newblock Optimal transport and risk aversion in kyle's model of informed
  trading.
\newblock {\em arXiv e-prints}, pages arXiv--2006, 2020.

\bibitem[BCW00]{back2000imperfect}
Kerry Back, C~Henry Cao, and Gregory~A Willard.
\newblock Imperfect competition among informed traders.
\newblock {\em The journal of finance}, 55(5):2117--2155, 2000.

\bibitem[BD21]{barger2021insider}
Weston Barger and Ryan Donnelly.
\newblock Insider trading with temporary price impact.
\newblock {\em International Journal of Theoretical and Applied Finance},
  24(02):2150006, 2021.

\bibitem[BE20]{bose2020kyle}
Shreya Bose and Ibrahim Ekren.
\newblock Kyle-back models with risk aversion and non-gaussian beliefs.
\newblock {\em to appear in Annals of Applied Probability, arXiv:2008.06377},
  2020.

\bibitem[BE21]{bose2021multidimensional}
Shreya Bose and Ibrahim Ekren.
\newblock Multidimensional kyle-back model with a risk averse informed trader.
\newblock {\em arXiv preprint arXiv:2111.01957}, 2021.

\bibitem[Bjo20]{bjork}
Tomas Bjork.
\newblock {\em Arbitrage Theory in Continuous Time}.
\newblock Oxford University Press, 4 edition, 2020.

\bibitem[BKK95]{bhatt1995uniqueness}
Abhay~G Bhatt, G~Kallianpur, and Rajeeva~L Karandikar.
\newblock Uniqueness and robustness of solution of measure-valued equations of
  nonlinear filtering.
\newblock {\em The Annals of Probability}, 23(4):1895--1938, 1995.

\bibitem[BLL13]{back2013liquidity}
Kerry Back, Tao Li, and Alexander Ljungqvist.
\newblock Liquidity and governance.
\newblock Technical report, National Bureau of Economic Research, 2013.

\bibitem[BP98]{back1998long}
Kerry Back and Hal Pedersen.
\newblock Long-lived information and intraday patterns.
\newblock {\em Journal of financial markets}, 1(3-4):385--402, 1998.

\bibitem[Bre91]{b}
Yann Brenier.
\newblock Polar factorization and monotone rearrangement of vector-valued
  functions.
\newblock {\em Communications on pure and applied mathematics}, 44(4):375--417,
  1991.

\bibitem[CC07]{cc2007}
Luciano Campi and Umut Cetin.
\newblock Insider trading in an equilibrium model with default: a passage from
  reduced-form to structural modelling.
\newblock {\em Finance and stochastics}, 11(4):591--602, 2007.

\bibitem[CCD11]{ccd}
Luciano Campi, Umut Cetin, and Albina Danilova.
\newblock Dynamic markov bridges motivated by models of insider trading.
\newblock {\em Stochastic Processes and their Applications}, 121(3):534--567,
  2011.

\bibitem[{\c{C}}D16]{cd1}
Umut {\c{C}}etin and Albina Danilova.
\newblock Markovian nash equilibrium in financial markets with asymmetric
  information and related forward--backward systems.
\newblock {\em The Annals of Applied Probability}, 26(4):1996--2029, 2016.

\bibitem[CD21]{cetin2021pricing}
Umut Cetin and Albina Danilova.
\newblock On pricing rules and optimal strategies in general kyle--back models.
\newblock {\em SIAM Journal on Control and Optimization}, 59(5):3973--3998,
  2021.

\bibitem[CDF15]{collin2015shareholder}
Pierre Collin-Dufresne and Vyacheslav Fos.
\newblock Shareholder activism, informed trading, and stock prices.
\newblock {\em Swiss Finance Institute Research Paper}, (13-70), 2015.

\bibitem[CDF16]{cdf}
Pierre Collin-Dufresne and Vyacheslav Fos.
\newblock Insider trading, stochastic liquidity, and equilibrium prices.
\newblock {\em Econometrica}, 84(4):1441--1475, 2016.

\bibitem[CEN22]{cen}
Reda Chhaibi, Ibrahim Ekren, and Eunjung Noh.
\newblock Imperfect information and risk aversion in kyle model.
\newblock {\em working paper}, 2022.

\bibitem[Cho03]{cho}
Kyung-Ha Cho.
\newblock Continuous auctions and insider trading: uniqueness and risk
  aversion.
\newblock {\em Finance and Stochastics}, 7(1):47--71, 2003.

\bibitem[CKL22]{choi2022trading}
Jin~Hyuk Choi, Heeyoung Kwon, and Kasper Larsen.
\newblock Trading constraints in continuous-time kyle models.
\newblock {\em arXiv preprint arXiv:2206.08117}, 2022.

\bibitem[CS10]{caldentey2010insider}
Ren{\'e} Caldentey and Ennio Stacchetti.
\newblock Insider trading with a random deadline.
\newblock {\em Econometrica}, 78(1):245--283, 2010.

\bibitem[EKPQ97]{el1997backward}
Nicole El~Karoui, Shige Peng, and Marie~Claire Quenez.
\newblock Backward stochastic differential equations in finance.
\newblock {\em Mathematical finance}, 7(1):1--71, 1997.

\bibitem[EM{\v{Z}}22]{ekren2022kyle}
Ibrahim Ekren, Brad Mostowski, and Gordan {\v{Z}}itkovi{\'c}.
\newblock Kyle's model with stochastic liquidity.
\newblock {\em arXiv preprint arXiv:2204.11069}, 2022.

\bibitem[FWY99]{follmer1999canonical}
Hans F{\"o}llmer, Ching-Tang Wu, and Marc Yor.
\newblock Canonical decomposition of linear transformations of two independent
  brownian motions motivated by models of insider trading.
\newblock {\em Stochastic processes and their applications}, 84(1):137--164,
  1999.

\bibitem[KS91]{karatzas}
Ioannis Karatzas and Steven~E. Shreve.
\newblock {\em brownian motion and stochastic calculus}.
\newblock Springer, 2 edition, 1991.

\bibitem[Kyl85]{kyle}
Albert~S Kyle.
\newblock Continuous auctions and insider trading.
\newblock {\em Econometrica: Journal of the Econometric Society}, pages
  1315--1335, 1985.

\bibitem[Las04]{las2007}
Guillaume Lasserre.
\newblock Asymmetric information and imperfect competition in a continuous time
  multivariate security model.
\newblock {\em Finance and Stochastics}, 8(2):285--309, 2004.

\bibitem[Las13]{lassalle2013causal}
R{\'e}mi Lassalle.
\newblock Causal transference plans and their monge-kantorovich problems.
\newblock {\em arXiv preprint arXiv:1303.6925}, 2013.

\bibitem[LS77]{liptser1977statistics}
Robert~Shevilevich Liptser and Al'bert~Nikolaevich Shiriaev.
\newblock {\em Statistics of random processes: General theory}, volume 394.
\newblock Springer, 1977.

\bibitem[McC95]{m}
Robert~J McCann.
\newblock Existence and uniqueness of monotone measure-preserving maps.
\newblock {\em Duke Mathematical Journal}, 80(2):309--323, 1995.

\bibitem[MT22]{ma2022generalized}
Jin Ma and Ying Tan.
\newblock A generalized kyle-back strategic insider trading model with dynamic
  information.
\newblock {\em arXiv e-prints}, 2022.

\bibitem[RW00]{rogers2000diffusions}
L~Chris~G Rogers and David Williams.
\newblock {\em Diffusions, Markov processes and martingales: Volume 2, It{\^o}
  calculus}, volume~2.
\newblock Cambridge university press, 2000.

\bibitem[San15]{santambrogio2015optimal}
Filippo Santambrogio.
\newblock Optimal transport for applied mathematicians.
\newblock {\em Birk{\"a}user, NY}, 55(58-63):94, 2015.

\bibitem[Vil09]{villani}
C{\'e}dric Villani.
\newblock {\em Optimal transport: old and new}, volume 338.
\newblock Springer, 2009.

\bibitem[Yin20]{ying2020pre}
Chao Ying.
\newblock The pre-fomc announcement drift and private information: Kyle meets
  macro-finance.
\newblock {\em Available at SSRN 3644386}, 2020.

\end{thebibliography}

\end{document}